\def\a{\alpha}		 		  
		 \def\la{\lambda}	  
		 		  \def\z{\zeta}
\def\ch{\chi}		 		  \def\vf{\varphi}
	 	  \def\vf{\varphi}
		 	  \def\La{\Lambda}
\def\D{{\mathbb D}}	 
\def\C{{\mathbb C}}	 \def\N{{\mathbb N}}
\def\ca{{\mathcal A}}	 \def\cb{{\mathcal B}}
\def\cc{{\mathcal C}}	 
\def\ch{{\mathcal H}}	 
	 \def\cm{{\mathcal M}}
\def\({\left(}		 \def\){\right)}
\def\pd{\partial}
\newtheorem{prop}{\sc Proposition}
\newtheorem{lem}[prop]{\sc Lemma}
\newtheorem{thm}[prop]{\sc Theorem}
\newtheorem{other}{\sc Theorem}				 
\newtheorem{ex}[other]{\sc Example}     
\begin{document}
\title[Weighted composition operators in functional Banach spaces]{Weighted composition operators in functional Banach spaces: an axiomatic approach}
\author[I. Ar\'evalo]{Irina Ar\'evalo}
\address{Departamento de Matem\'aticas, Universidad Aut\'onoma de
Madrid, 28049 Madrid, Spain}
\email{irina.arevalobarco@gmail.com}
\author[D. Vukoti\'c]{Dragan Vukoti\'c}
\address{Departamento de Matem\'aticas, Universidad Aut\'onoma de
Madrid, 28049 Madrid, Spain} \email{dragan.vukotic@uam.es}
\thanks{The authors were supported by MTM2015-65792-P and partially by MTM2017-90584-REDT, MINECO, Spain.}
\subjclass[2010]{47B38, 47B33}
\date{22 October, 2018.}
\keywords{Weighted composition operators, functional Banach spaces, bounded domains, invertible operators.}
\begin{abstract}
We work with very general Banach spaces of analytic functions in the disk or other domains which satisfy a minimum number of natural axioms. Among the preliminary results, we discuss some implications of the basic axioms and identify all functional Banach spaces in which every bounded analytic function is a pointwise multiplier. Next, we characterize (in various ways) the weighted composition operators among the bounded operators on such spaces, thus generalizing some well-known results on multiplication or composition operators. We also characterize the invertible weighted composition operators on the disk and on general Banach spaces of analytic functions on bounded domains under different sets of axioms whose connections we discuss by providing appropriate examples. This generalizes and complements various recent results by Gunatillake, Bourdon, and Hyvärinen-Lindström-Nieminen-Saukko.
\end{abstract}
\maketitle
\section*{Introduction}
 \label{sec-intro}
\par
For a function $F$ analytic in a planar domain $\Omega$ and an analytic map $\vf$ of $\Omega$ into itself, the \textit{weighted composition operator\/} (sometimes abbreviated as WCO) $T_{F,\vf}$ is defined formally by the formula $T_{F,\vf}f = F (f\circ\vf) = M_F C_\vf f$ as the composition followed by multiplication; it may or may not be bounded on a given space of analytic functions. Such operators are relevant in the theory of function spaces and operator theory for various reasons:
\begin{itemize}
 \item
They generalize both the pointwise multiplication operators $M_F$ (when $\vf(z)=z$) and the composition operators $C_\vf$ (when $F\equiv 1$);
 \item
The only surjective isometries of some classical spaces of analytic functions are precisely of this type (\textit{cf.\/} \cite{Fo1, Fo2}, \cite{Kol});
 \item
Some classical operators such as the Ces\`aro operator \cite{CS} and the Hilbert matrix operator \cite{DiS} can be written as average values of weighted composition operators;
 \item
Close connections have been found between the boundedness \cite{Shim} or compactness \cite{Sm} of certain weighted composition operators on weighted Bergman spaces and the well-known  Brennan conjecture in geometric function theory.
\end{itemize}
The earliest references on WCOs appear to be \cite{Ka} and \cite{Ki}. Such operators have been studied more recently in a number of papers, \textit{e.g.}, in \cite{CHD1, CHD2}, \cite{CZ}, \cite{M}, \cite{OZ}, \cite{CGP}. The point of view of the present paper is to study the properties of WCOs in a unified way by working with general Banach spaces of analytic functions that satisfy only a handful of axioms while still obtaining that same conclusions as in the known special situations, thus covering many cases in one stroke. At least two papers have appeared recently guided by a similar philosophy: \cite{Bo},  \cite{CT}.
\par
For individual multiplication or composition operators on the basic spaces such as the Hardy or Bergman spaces of the disk, the properties such as injectivity, surjectivity, or invertibility are relatively simple to understand; \textit{cf.\/} \cite{Ax}, \cite{V1}, \cite{CM}. However, for the weighted composition operators it is no longer so simple to describe the operators with such properties. In the recent papers \cite{G} and \cite{HLNS} the invertibility and Fredholm property of weighted composition operators were studied, in some special cases leading to identifying the spectrum of such a transformation. In \cite{G} the invertibility was characterized on the Hardy space $H^2$ and in \cite{HLNS} on more general spaces that satisfy a number of conditions. Our treatment is also axiomatic but our assumptions are much weaker, hence our results are far more general and some even work for spaces on arbitrary bounded domains. They also complement or generalize some of the results from \cite{Bo} where invertibility was considered even in general classes of analytic functions on the disk, not necessarily linear spaces. We explain below the main results and the organization of the paper.
\par
In Section~\ref{sec-prelim} we review a list of classical function spaces and examine some consequences of the basic axioms such as the boundedness of point evaluations. Among other results, we obtain a statement that might be of independent interest. Namely, it is well-known that every pointwise multiplier of a ``reasonable'' Banach space of analytic functions into itself should be a bounded analytic function and the converse is false in general. We find a condition (Theorem~\ref{thm-mult}) that describes in terms of a very simple property (called the Domination Property) all functional Banach spaces in which all bounded analytic functions are pointwise multipliers.
\par
In Section~\ref{sec-wco-charact} we consider functional Banach spaces of the unit disk that satisfy only five basic and very natural axioms, among them the boundedness of the shift operator (multiplication by the independent variable $z$). It is well-known that in the Hardy space $H^2$ the only operators that commute with the shift are the multiplication operators $M_F$ (by bounded analytic functions). It is also a classical result that all multiplicative operators on $H^2$, in the sense that $T(fg)=Tf\cdot Tg$ for all $f$, $g\in H^2$ with $fg\in H^2$ are precisely the composition operators $C_\vf$. For any space $X$ that satisfies our five basic axioms and a non-trivial bounded operator $T$ on $X$, Theorem~\ref{thm-wco-charact} extends both results in one stroke by giving several conditions on $T$, all of them equivalent to the property of being a weighted composition operator.
\par
We then use the above result to prove Theorem~\ref{thm-invert-comp-disk}, a characterization of invertible compact composition operators in all spaces that satisfy only our five basic axioms. This result generalizes similar results from \cite{G} and \cite{HLNS} and complements the results from \cite{Bo}. The results are accompanied by examples showing that if certain axioms are omitted, the results no longer hold. Since the axioms listed are so natural, we have limited the number of examples presented. It should be stressed that, even though a WCO is a product of two operators, $C_\vf$ and $M_F$ and often both of them are bounded, it is still possible for both of them to be unbounded while the product $T_{F,\vf}$ is bounded. What might come as a surprise is our Example~D where a functional Banach space of the disk and a bounded WCO are exhibited with the following properties: neither $C_\vf$ nor $M_F$ is bounded; however, the product $T_{F,\vf}$ is a bijective isometry of the space and an involution.
\par
In Section~\ref{sec-invert-5axioms} we consider functional Banach spaces on general bounded domains in the plane (without any connectivity assumptions whatsoever). Again, we require five axioms to be satisfied, some of them slightly different from those considered in  Section~\ref{sec-wco-charact}. These axioms are fulfilled in a number of relevant spaces of the disk. The main result of the section is Theorem~\ref{thm-wco-invert} which characterizes the invertible WCOs in arbitrary spaces that satisfy these axioms. The statement has the same flavor as Theorem~\ref{thm-invert-comp-disk} and the results from the recent papers cited but seems significantly more general than any of them. The proof relies in part on Theorem~\ref{thm-self-map} which may be of independent interest and which shows that analytic self-maps of bounded domains with certain boundary behavior must be onto.

\section{Some preliminary results}
 \label{sec-prelim}

\subsection{Functional Banach spaces}
 \label{subsec-fbs}
Denote by $\Omega$ a domain (\textit{i.e.,} an open and connected set) in the complex plane. We will consider general domains $\Omega$ which will sometimes be required to be bounded and sometimes will simply be the unit disk, denoted by $\D$.
\par
We will denote by $\ch (\Omega)$ the algebra of all analytic functions in $\Omega$. Let $X\subset\ch (\Omega)$ be a Banach space of analytic functions in $\Omega$. Abstract Banach spaces of analytic functions, assumed to satisfy only a handful of axioms, have been considered in the literature,  \textit{e.g.\/}, in \cite{BrS}, \cite{Sh1}, \cite{ADMV}, or \cite{Bo}. \par
Given a point $z$ in $\Omega$, we will denote by $\La_z$ the \textit{point evaluation functional\/} corresponding to $z$, defined by $\La_z(f)=f(z)$, for $f\in X$. Throughout the paper we assume that the point evaluation functionals are bounded in $X$. (This is the most common axiom in the literature.) Thus, they are uniformly bounded on each compact subset of $\Omega$; indeed, given a compact set $K\subset\Omega$, we have
$$
 \sup_{z\in K} |\La_z (f)| = \sup_{z\in K} |f(z)| < \infty
$$
for each $f\in X$, hence $\sup_{z\in K} \|\La_z\|<\infty$ by a direct application of the uniform boundedness principle. Our spaces will also often be assumed to contain the polynomials and we may also require that the polynomials be dense in the norm of $X$. Some additional axioms, satisfied by many specific spaces, will be assumed in different sections.
\par
\subsection{Examples of function spaces}
 \label{subsec-spaces}
We give here a partial list of classical Banach spaces of analytic functions that will be relevant for further discussion.
\begin{itemize}
 \item
The classical Hardy spaces $H^p$, $1\le p<\infty$, defined as the set of functions in $\ch (\D)$ such that
$$
 \|f\|_{H^p}=\sup_{0\le r<1} M_p(r,f) = \sup_{0\le r<1} \( \int_0^{2\pi} |f(r e^{i \theta})|^p \frac{d\theta}{2\pi}\)^{1/p} <\infty\,.
$$
(See \cite{D}, \cite{Ko}, among other sources.)
\item
The Bergman spaces $A^p$, $1\le p<\infty$ (\cite{DuS}, \cite{HKZ}), consisting of all functions in $\ch (\D)$ that are $p$-integrable with respect to the normalized Lebesgue area measure $dA(z)=\frac1{\pi} d x d y = \frac1{\pi} r dr d\theta$ ($z=x+iy=r e^{i\theta}\in\D$), equipped with the norm
$$
 \|f\|_{A^p} = \( \int_\D |f(z)|^p dA(z)\)^{1/p} <\infty\,.
$$
\item
The mixed-norm spaces $H(p,q,\a)$ (considered by Hardy and Littlewood and formally first introduced and studied by Flett \cite{Fl}, \cite{JVA}), consisting of all functions $f$ in $\ch(\D)$ for which
$$
 \|f\|_{p,q,\a} = \( \int_0^1 (1-r)^{\alpha q-1} M_p^q(r,f)  dr\)^{1/q} <\infty\,, \qquad 0<q<\infty\,,
$$
and
$$
 \|f\|_{p,\infty,\a} = \sup_{0\le r<1} (1-r)^{\alpha} M_p(r,f) <\infty\,.
$$
Here also $0<p\le\infty$ and $\alpha>0$. Of course, $H(p,p,1/p)=A^p$.
\item
The weighted Hilbert spaces $H^2_\beta$, introduced by Shields \cite{Shie} (see also \cite{CM}), consisting of all analytic functions in $\D$ with the Taylor series $f(z)=\sum_{n=0}^\infty a_n z^n$ in $\D$ and such that
$$
 \|f\|_{\beta}^2 = \sum_{n=0}^\infty \beta(n)^2 |a_n|^2
$$
(thus, generalizing the Hardy, Dirichlet, and Bergman spaces, with $\beta(n)^2=1$, $n$, and $(n+1)^{-1}$ respectively).
\item
The disk algebra $\ca=H^\infty\cap \cc(\overline{\D})$, equipped with the norm from the larger space $H^\infty$.
\item
The Bloch space $\cb$ (introduced by Anderson, Clunie and Pommerenke; see \cite{DuS}, \cite{HKZ}, \cite{Zh1}) of all $f$ in $\ch(\D)$ for which
$$
 \|f\|_\cb=|f(0)|+\sup_{z\in\D} (1-|z|^2) |f^\prime (z)|
$$
and its closed subspace $\cb_0$ called the little Bloch space (the closure of the polynomials in the above norm) whose functions satisfy
$$
 \lim_{|z|\to 1^-} (1-|z|^2) |f^\prime (z)| = 0\,.
$$
\item
The analytic (diagonal) Besov spaces $B^p$, $1<p<\infty$, of all functions analytic in $\D$ for which
$$
 \|f\|_{B^p}=|f(0)|+ \( (p-1) \int_{\D} (1-|z|^2)^{p-2} |f^\prime (z)|^p dA(z)\)^{1/p}<\infty\,,
$$
all of them being invariant under compositions with disk automorphisms, contained in $\cb_0$ and containing the minimal conformally invariant subspace $B^1$ of all functions for which $f^{\prime\prime}\in A^1$. It is well known that $B^p\subset \cb_0 \subset\cb$.See \cite[Chapter~5]{Zh1}.
\item
The weighted growth spaces $H_v^\infty$ defined by the condition
$$
 \|f\|_v = \sup_{z\in\D}\,v(z)\,|f(z)| < \infty\,,
$$
where the weight $v$ is a strictly positive function, some typical examples being $v(z)=(1-|z|)^\alpha$ and $v(z)=e^{-|z|^2}$. See, \textit{e.g.\/}, \cite{BDLT} or \cite{CHD1}.
\item
The Bargmann-Fock space $F^2$ of entire functions \cite{Zh2} which are square integrable in the complex plane $\C$ with respect to the Gaussian measure:
$$
 \|g\|_F^2 = \frac{1}{\pi} \int_\C |g(z)|^2 e^{-|z|^2} dA(z)<\infty\,. $$
\end{itemize}

\subsection{Some consequences of a basic axiom in the disk}
 \label{subsec-conseq-axiom}
Let $X\subset\ch(\D)$ be a functional Banach space on the disk on which the point evaluations are bounded. If $f\in X$ and $f(z)= \sum_{n=0}^\infty a_n z^n$ in $\D$ then the functional $\La_0$, given by $\La_0(f)=a_0=f(0)$,  is bounded. This observation extends to the remaining coefficient functionals.
\par
\begin{prop} \label{prop-coeff-est}
Let $X\subset\ch(\D)$ be a Banach space which contains the polynomials and on which the point evaluations are bounded. The following assertions hold.
\par
(a) All coefficient functionals $\La_n (f) =a_n$ are bounded. More precisely, for every $n\in\N$ and $r\in (0,1)$ there exists a constant $M_r>0$ such that \ $|a_n| \le \dfrac{M_r}{r^n} \|f\|_X$ \ for all $f\in X$.
\par
(b) $\limsup_{n\to\infty} \sqrt[n]{\|z^n\|_X}\ge 1$.
\end{prop}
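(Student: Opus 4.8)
The plan is to derive part (a) directly from the Cauchy integral formula for the Taylor coefficients, combined with the uniform boundedness of the point evaluations on each circle, and then to obtain part (b) as a clean corollary by testing (a) on the monomials.

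First I would fix $r\in(0,1)$ and recall the Cauchy coefficient formula
$$
 a_n = \frac{1}{2\pi r^n} \int_0^{2\pi} f(re^{i\t})\, e^{-in\t}\, d\t,
$$
which is valid since $f$ is analytic in $\D$ and the circle of radius $r$ lies inside $\D$. Estimating the integrand by $|f(re^{i\t})| = |\La_{re^{i\t}}(f)| \le \|\La_{re^{i\t}}\|\,\|f\|_X$ and setting
$$
 M_r := \sup_{|z|=r} \|\La_z\|,
$$
I would invoke the observation established just before the statement: by the uniform boundedness principle the point evaluations are uniformly bounded on the compact circle $\{|z|=r\}$, so $M_r<\infty$. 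Integrating the pointwise estimate then yields $|a_n| \le (M_r/r^n)\,\|f\|_X$ for every $n$ and every $f\in X$, which is precisely the claimed bound and in particular shows that each $\La_n$ is bounded.

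For part (b) I would apply the estimate from (a) to the specific function $f=z^n$, whose $n$-th Taylor coefficient equals $1$. This gives $1 \le (M_r/r^n)\,\|z^n\|_X$, hence $\|z^n\|_X \ge r^n/M_r$ and therefore $\sqrt[n]{\|z^n\|_X} \ge r/\sqrt[n]{M_r}$. Since $M_r$ is a fixed positive constant, $\sqrt[n]{M_r}\to 1$, so taking the $\limsup$ over $n$ produces $\limsup_n \sqrt[n]{\|z^n\|_X} \ge r$; as this holds for every $r\in(0,1)$, letting $r\to 1^-$ finishes the proof.

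The only point requiring care — and the reason (b) follows at all — is that the constant $M_r$ in (a) must be independent of $n$, depending solely on the radius $r$. This uniformity is exactly what the supremum over the circle provides, and it is what allows $\sqrt[n]{M_r}$ to tend to $1$ and be harmlessly absorbed in the limit. I do not anticipate any genuine obstacle beyond this bookkeeping.
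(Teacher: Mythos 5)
Your proof is correct and follows essentially the same route as the paper's: both obtain (a) from the Cauchy coefficient formula together with the uniform boundedness of point evaluations on the circle $\{|z|=r\}$ (the paper phrases it via the Cauchy estimate $|a_n|\le \max_{|w|=r}|f(w)|/r^n$ rather than writing out the integral, a purely cosmetic difference), and both deduce (b) by testing the estimate on $f(z)=z^n$ and letting $r\to 1^-$. Your closing remark correctly identifies the key point, namely that $M_r$ is independent of $n$.
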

\begin{proof}
Recall that the point evaluation functionals on $X$ are uniformly bounded on compact subsets of the disk. Thus,
$$
 \max_{|w|=r} |f(w)| \le M_r \|f\|_X
$$
for some fixed $M_r>0$ and all $f\in X$. Let $f\in X$ with $f(z)= \sum_{n=0}^\infty a_n z^n$. The Cauchy integral formula yields
$$
 |a_n| \le \frac{\max_{|w|=r} |f(w)|}{r^n} \le	\frac{M_r}{r^n} \|f\|_X \,.
$$
\par
(b) Applying the conclusion of part (a) to the function $f(z)=z^n$, we get $1\le \dfrac{M_r}{r^n} \|f\|_X$ for arbitrary but fixed $r$, $0<r<1$. Hence
$$
 r = \lim_{n\to\infty} \frac{r}{\sqrt[n]{M_r}} \le \limsup_{n\to\infty} \sqrt[n]{\|z^n\|_X} \,.
$$
Since this holds for arbitrary $r\in (0,1)$, the statement follows.
\end{proof}
\par
In the next section a relevant condition in some statements will be the assumption that $\limsup_{n\to\infty} \sqrt[n]{\|z^n\|_X}\le 1$ and, thus, actually $\limsup_{n\to\infty} \sqrt[n]{\|z^n\|_X}=1$. It is not difficult to see that in most spaces of interest to us the above $\limsup$ is actually equal to one. This is readily verified for the Hardy, Bergman, and mixed-norm spaces but also in the Bloch, analytic Besov spaces, and weighted Banach spaces $H_v^\infty$.
\par
The next example may not seem so natural but will be relevant for further discussion.
\begin{ex} \label{ex-restr-fock}
Consider the space $X$ whose elements $f|_\D$ are the restrictions to the disk of the functions $f$ in the Bargmann-Fock space $F^2$. Clearly, if $f|_\D\in X$, its extension $f$ to the entire plane is unique by the principle of isolated zeros, hence there is a one-to-one correspondence between the members of $X$ and the functions in $F^2$. If $X$ is equipped by the norm of the extension to the plane: $\|f|_\D\|_X = \|f\|_{F^2}$, it is clear that it is a Banach space (actually, Hilbert) of analytic functions in the unit disk. It is also well-known that the point evaluations (at all points in the plane) are bounded on $F^2$ \cite[Theorem~2.7]{Zh2} and the polynomials are dense  \cite[Proposition~2.9]{Zh2}, hence the point evaluations (at the points of $\,\D$) are bounded on $X$ and the polynomials are dense in $X$. So, the space $X$ defined in this fashion is one more in our collection of spaces; however, it does not satisfy the condition $\limsup_{n\to\infty} \sqrt[n]{\|z^n|_\D\|_X}=1$. In fact, one easily calculates that
$$
 \|z^n|_\D\|_X^2 = \|z^n\|_{F^2}^2 =2 \int_0^\infty r^{2n+1} e^{-r^2} dr = \Gamma(n+1) = n!
$$
and Stirling's formula shows that $\lim_{n\to\infty}\sqrt[n]{\|z^n\|_X} =\infty$.
\end{ex}
\par

\subsection{Pointwise multipliers and the domination property}
 \label{subsec-ptwse-mult}
A function $F$ analytic in a planar domain $\Omega$ is said to be a \textit{pointwise multiplier\/} of a Banach space of analytic functions $X$ into itself if $F f\in X$ for every $f\in X$. For any such $F$ we can define the	 \textit{pointwise multiplication operator\/} $M_F$ in a natural way:
$$
 M_F : X \to X\,, \quad M_F f = F f\,, \qquad f\in X\,.
$$
Under the assumption that each point-evaluation functional $\La_\zeta(f)=f(\zeta)$ is bounded on $X$, a standard normal family argument shows that $M_F$ has closed graph and is, thus, a bounded operator. Pointwise multipliers on various spaces of analytic functions were examined in a number of papers, \textit{e.g.\/}, in \cite{At}, \cite{Ax}, \cite{BrS}, \cite{V1, V2}, \cite{Z}.
\par
Let us denote by $M_F$ the pointwise multiplication operator with symbol $F$ and by $\mathcal{M}(X)$ the space of all (bounded) multipliers from $X$ into itself, a closed subspace of the space of bounded operators on $X$. It is a simple consequence of the boundedness of point evaluations that $\mathcal{M}(X)\subseteq H^\infty(\Omega)$, the space of all bounded analytic functios in $\Omega$ and	 $\|F\|_\infty\le \|M_F\|$. See, for example, \cite[Lemma~1.1]{ADMV} for more details.
\par
It is known that, in general, $H^\infty(\Omega)\neq\cm(X)$; for example, in the Bloch space $\cb$ there exist bounded non-multipliers of the space; see \cite{BrS}. Thus, a most natural question comes to mind: for which ``reasonable'' Banach spaces $X$ of analytic functions is $H^\infty(\Omega) = \cm(X)$ true? We have not been able to find an answer in the literature and we give here a very simple answer in terms of what we call the domination property.
\par\medskip
We will say that a Banach space $X$ of analytic functions has the \textit{Domination Property\/} (DP) if whenever $f\in\ch(\Omega)$,  $g\in X$ and $|f(z)|\le |g(z)|$ holds for all $z\in\Omega$, then $f\in X$.  \par\medskip
It is readily checked that all Hardy, Bergman, mixed-norm spaces $H(p,q,\a)$, and weighted Banach spaces $H_v^\infty$ have this property. We will now see that (DP) is actually equivalent to its stronger form and characterizes all spaces in which the multipliers coincide with bounded analytic functions.
\par
\begin{thm} \label{thm-mult}
Let $\Omega$ be a planar domain and $X\subset\ch(\Omega)$ a functional Banach space in which the point evaluations are bounded. Then the following conditions are equivalent:
\begin{itemize}
 \item[(a)]
$H^\infty(\Omega) \subset \cm(X)$; that is, $H^\infty(\Omega)=\cm(X)$.
 \item[(b)]
$X$ has (DP).
 \item[(c)]
There is a universal constant $C>0$ such that if $f\in\ch(\Omega)$,  $g\in X$ and $|f(z)|\le |g(z)|$ holds for all $z\in\Omega$, then $f\in X$ and $\|f\|_X\le C \|g\|_X$.
\end{itemize}
Moreover, the least constant possible in the inequality that defines the property (DP) is $C=\|J\|$, where $J$ is the correspondence operator $J\,\colon\,H^\infty\to\cm(X)$, $J(F)=M_F$.
\par
In the special case when $\Omega=\D$, both conditions above are equivalent to the following:
\begin{itemize}
 \item[(d)]
The set $BP$ of all Blaschke products satisfies $BP\subset\cm(X)$ and $\sup_{B\in BP}\|M_B\|_{X\to X}<\infty$;
\end{itemize}
\end{thm}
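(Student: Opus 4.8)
The plan is to establish the cycle $(a)\Leftrightarrow(b)$, then $(b)\Leftrightarrow(c)$ together with the \emph{moreover} part, and finally the disk equivalence $(a)\Leftrightarrow(d)$. The engine behind $(a)\Leftrightarrow(b)$ is an elementary quotient trick. For $(b)\Rightarrow(a)$: given $F\in H^\infty(\Omega)$ and $g\in X$ we have $|F(z)g(z)|\le\|F\|_\infty|g(z)|=|(\|F\|_\infty g)(z)|$, and since $\|F\|_\infty g\in X$, the Domination Property forces $Fg\in X$; thus $F\in\cm(X)$, and because $\cm(X)\subseteq H^\infty(\Omega)$ always holds we obtain $\cm(X)=H^\infty(\Omega)$. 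For the converse $(a)\Rightarrow(b)$: suppose $f\in\ch(\Omega)$, $g\in X$ and $|f|\le|g|$, where we may assume $g\not\equiv0$. Set $F=f/g$. The inequality $|f|\le|g|$ forces $f$ to vanish at each zero of $g$ to at least the same order, so $F$ extends analytically across the zeros of $g$ and $|F|\le1$ on $\Omega$, i.e. $F\in H^\infty(\Omega)$ with $\|F\|_\infty\le1$; by $(a)$, $F\in\cm(X)$, hence $f=Fg=M_Fg\in X$.

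Next I would upgrade (DP) to its quantitative form $(c)$. Since $(c)\Rightarrow(b)$ is trivial, the point is $(b)\Rightarrow(c)$, and the natural tool is the closed graph theorem applied to the correspondence operator $J\colon H^\infty(\Omega)\to\cm(X)$, $J(F)=M_F$, which is everywhere defined once $(a)$ is known and maps into the Banach space $\cm(X)$. To see that $J$ is bounded, suppose $F_n\to F$ in $H^\infty$ and $M_{F_n}\to T$ in operator norm; for each $g\in X$ the boundedness of the point evaluations turns the norm convergence $M_{F_n}g\to Tg$ into pointwise convergence, while $F_ng\to Fg$ pointwise because $F_n\to F$ uniformly, so $Tg=Fg=M_Fg$ and the graph is closed. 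With $C:=\|J\|$ the quotient trick of the previous paragraph gives, for $|f|\le|g|$, that $\|f\|_X=\|M_Fg\|_X\le\|M_F\|\,\|g\|_X\le\|J\|\,\|F\|_\infty\,\|g\|_X\le C\|g\|_X$, which is $(c)$. The minimality statement follows from the matching lower bound: for any admissible constant $C'$ and any $F$ with $\|F\|_\infty\le1$, applying $(c)$ to $f=Fg$ (for which $|f|\le|g|$) yields $\|M_F\|\le C'$, so $\|J\|\le C'$; hence the least constant equals $\|J\|$.

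For the disk, $(a)\Rightarrow(d)$ is immediate: every Blaschke product lies in $H^\infty(\D)$ with sup-norm $1$, so by $(a)$ it is a multiplier, and $\|M_B\|\le\|J\|\,\|B\|_\infty=\|J\|$ gives $\sup_{B\in BP}\|M_B\|\le\|J\|<\infty$. The substantial implication is $(d)\Rightarrow(a)$. Fix $F\in H^\infty(\D)$ with $\|F\|_\infty\le1$ and $g\in X$. By Carath\'eodory's theorem there are finite Blaschke products $B_n\to F$ uniformly on compact subsets of $\D$, and writing $S:=\sup_{B\in BP}\|M_B\|$ we have $B_ng\in X$ with $\|B_ng\|_X\le S\|g\|_X$ while $B_ng\to Fg$ locally uniformly. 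It remains to conclude that the locally uniform limit $Fg$ again belongs to $X$; this is the heart of the matter, and it is exactly here that the uniform bound in $(d)$ must be exploited, since without it the conclusion is false (for instance, in the disk algebra $\ca$ the Fej\'er means of a function in $H^\infty\setminus\ca$ form a sup-norm bounded sequence whose locally uniform limit escapes the space).

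The main obstacle, therefore, is to pass from a norm-bounded, locally uniformly convergent sequence in $X$ to membership of its limit in $X$, a step that fails for general functional Banach spaces and so must genuinely use the uniform multiplier bound. I would handle it in one of two ways. The soft route embeds $X$ isometrically in its bidual and extracts, by Banach--Alaoglu, a weak-$*$ cluster point $\Phi$ of $(B_ng)$ with $\|\Phi\|\le S\|g\|_X$; testing against the bounded functionals $\La_z$ gives $\langle\Phi,\La_z\rangle=\lim_nB_n(z)g(z)=F(z)g(z)$, so $\Phi$ represents $Fg$, and one must then argue that $\Phi$ actually lies in the canonical copy of $X$. The robust route, which keeps everything inside the complete space $X$ and thereby bypasses the bidual gap, is to represent $F$ as a barycenter $F(z)=\int B(z)\,d\mu(B)$ of a probability measure on Blaschke products (a Choquet/Fisher-type representation of the locally uniformly compact convex ball of $H^\infty(\D)$) and then to define $h:=\int Bg\,d\mu(B)$ as a Bochner integral in $X$; the uniform bound $\|Bg\|_X\le S\|g\|_X$ guarantees integrability and $\|h\|_X\le S\|g\|_X$, while commuting the bounded functional $\La_z$ with the integral gives $h(z)=\int B(z)g(z)\,d\mu(B)=F(z)g(z)$, whence $h=Fg\in X$ and $F\in\cm(X)$. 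I expect the strong measurability of $B\mapsto Bg$ and the precise support of the representing measure to be the only genuinely technical points.
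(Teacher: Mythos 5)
Your handling of (a)$\Leftrightarrow$(b)$\Leftrightarrow$(c) and of the optimal constant $C=\|J\|$ is correct and essentially coincides with the paper's proof: the same quotient trick $F=f/g$, the same closed-graph argument for the boundedness of $J$, and the same matching lower bound; likewise (a)$\Rightarrow$(d) is routine in both treatments.

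The genuine gap is in (d)$\Rightarrow$(a), and it is the one you flag yourself. The paper's tool there is Marshall's theorem: the convex hull of the Blaschke products is dense in the closed unit ball of $H^\infty$ \emph{in the sup norm}. You replace it by Carath\'eodory's theorem, which gives only locally uniform approximation by finite Blaschke products, and, as your Fej\'er-means example correctly shows, a norm-bounded, locally uniformly convergent sequence in a functional Banach space may have its limit outside the space; so an extra idea is indeed needed, and neither of your two routes supplies it. The bidual route stalls exactly where you concede it does: the cluster point $\Phi$ lives in $X^{**}$, the functionals $\La_z$ span only a weak-$*$ dense (not norm dense) subspace of $X^*$, and with no reflexivity or weak-compactness hypothesis available there is no mechanism forcing $\Phi$ back into the canonical copy of $X$; this is not a technicality, it is the whole difficulty. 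The Choquet route fails for a structural reason: the representation you postulate does not exist for all $F$ in the ball. Inner functions are extreme points of the unit ball of $H^\infty$, and an extreme point of a compact convex set has a unique representing probability measure, namely the point mass at itself (Bauer's theorem); hence a singular inner function such as $S(z)=\exp\bigl((z+1)/(z-1)\bigr)$, which is not a Blaschke product, is the barycenter of no probability measure carried by $BP$. Milman's theorem cannot help either, because the compact-open closure of $BP$ is the \emph{entire} unit ball --- that is precisely the Carath\'eodory theorem you quote --- so Choquet theory gives no localization of the measure to $BP$.

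What does work, and what the paper's one-line citation of Marshall's theorem is meant to trigger, is a scaling/telescoping series. Given $\|F\|_\infty\le 1$, Marshall's theorem provides $G_1$ in the convex hull of $BP$ with $\|F-G_1\|_\infty\le\frac{1}{2}$; then $2(F-G_1)$ lies again in the unit ball, so there is $G_2$ in the convex hull with $\|2(F-G_1)-G_2\|_\infty\le\frac{1}{2}$; iterating gives $F=\sum_{k\ge1}2^{-(k-1)}G_k$ with each $G_k$ a finite convex combination of Blaschke products. By (d) and the triangle inequality, $\|M_{G_k}\|\le S:=\sup_{B\in BP}\|M_B\|$, so $\sum_{k\ge1}2^{-(k-1)}M_{G_k}$ converges absolutely in the Banach space $\cb(X)$ of bounded operators on $X$ to some $T$ with $\|T\|\le 2S$; since norm convergence in $X$ implies pointwise convergence, $Tg=Fg$ for every $g\in X$, whence $Fg\in X$ and $\|M_F\|\le 2S$. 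Absolute convergence in $\cb(X)$ is the compactness substitute that norm density (Marshall) provides and locally uniform density (Carath\'eodory, Fisher) does not; it is also, in essence, the ``representing measure'' your second route was after, except that it has total mass $2$ rather than $1$ --- and, as the extreme-point obstruction shows, mass $1$ is impossible in general.
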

\begin{proof}
\fbox{(c)$\Rightarrow$(b)} \ Trivial.
\par\medskip
\fbox{(b)$\Rightarrow$(a)} \ Suppose $X$ has (DP). Let $F\in H^\infty(\Omega)$ and let $f\in X$ be arbitrary. Then the function $\|F\|_\infty f\in X$ and $|F(z) f(z)|\le \|F\|_\infty |f(z)|$
holds for all $z\in\Omega$, hence by (DP) we have $F f\in X$. This shows that $F\in\cm(X)$.
\par\medskip
\fbox{(a)$\Rightarrow$(c)} \ Now assume that $H^\infty(\Omega) \subset \cm(X)$ and let us show that (c) holds.
\par
First of all, observe that the correspondence operator $J\,\colon\,H^\infty\to\cm(X)$, given by $J(F)=M_F$, is bounded. To verify this, it suffices to see that it has closed graph by an  application of a normal families argument in the usual way: suppose that $F_n\to F$ in $H^\infty(\Omega)$ and $J(F_n)=M_{F_n}\to T$ in $\cb(X)$. In order to show that $T=M_F$, we note that
$$
 \|F_n f-Tf\|_X\le \|M_{F_n}-T\| \cdot \|f\|_X \to 0\,, \qquad n\to \infty\,.
$$
In view of the boundedness of point evaluations, it follows that
$F_n\to F$ pointwise (actually, uniformly on compact subsets) and also $F_n f\to Tf$ pointwise, hence $F_n f\to F f$ pointwise and therefore $Tf=F f$. This shows that $T=M_F$.
\par
Knowing that $J$ is a bounded operator, there exists a universal constant $C>0$ such that for each $F\in H^\infty(\Omega)$ we have $\|M_F\|\le C \|F\|_\infty$. Thus, whenever $\|F\|_\infty\le 1$, we have that $\|M_F\|\le C$ for some fixed constant $C$.
\par
Let $f\in\ch(\Omega)$ and $g\in X$ be such that $|f(z)|\le |g(z)|$ holds for all $z\in\D$. The trivial case $g\equiv 0$ yields $f\equiv 0$ which clearly presents no problem. When $g$ is not identically zero in $\Omega$, every zero of $g$ is a zero of $f$ (of at least the same order) and is isolated so one easily extends the function $F=f/g$ to be analytic in the whole domain $\Omega$. This function $F\in H^\infty(\Omega)$ and $\|F\|_\infty\le 1$. By the above observation based on the closed graph theorem, there is a fixed constant $C$ (independent of $F$) such that $\|M_F\|\le C$. Hence, we obtain
$$
 \|f\|_X = \|F g\|_X \le \|M_F\| \|g\|_X \le C \|g\|_X\,.
$$
This shows that (c) holds. It is also clear from the proof that the smallest possible value of $C$ is precisely $\|J\|$.
\par\medskip
In the case when $\Omega=\D$, it is readily seen that any of the assumptions (a), (b), or (c) implies (d). The converse follows from Marshall's theorem on the density of the convex hull of Blaschke products in the unit ball of $H^\infty$ \cite[Chapter~VII~B]{Ko}.
\end{proof}
\par
\subsection{Composition operators}
 \label{subsec-compos-ops}
Given an analytic function $\vf\in\ch (\Omega)$ such that $\vf(\Omega)\subset \Omega$, we will say that $\vf$ is an \textit{analytic self-map\/} of $\Omega$. It is natural to define the \textit{composition operator\/} $C_\vf$ with \textit{symbol\/} $\vf$ by the formula $C_\vf f=f\circ\vf$. In the case when case $\Omega=\D$, for a number of known spaces of the disk, such as the Hardy, Bergman, $H(p,q,\a)$, Bloch spaces, every analytic self-map of the disk defines a composition operator of the space into itself (which is automatically bounded by an application of the closed graph theorem). For a good exposition of the theory of composition operators on Hardy spaces and its rich interplay with geometric function theory and iterations of self-maps of the disk, we refer the reader to the monographs \cite{Sh1} and \cite{CM}.
\par
In $B^p$ spaces, only the maps $\vf$ that satisfy a certain Carleson measure condition define bounded composition operators. However, all disk automorphisms do have this property and even preserve the norm; in other words, these spaces are conformally invariant; see \cite{Zh1}.  In the weighted $H^\infty_v$ spaces, again, not all self-maps induce bounded composition operators \cite{BDLT}. In the Fock space, only certain linear maps are symbols of bounded composition operators \cite{CMS}.
\par
It is worth remarking that it does not seem easy to find a natural and satisfactory analogue of Theorem~\ref{thm-mult} for composition operators. At least we have not been able to identify a property that would play a role analogous to that of (DP) in this context.

\section{Weighted composition operators on the disk}
 \label{sec-wco-charact}

In this section we only consider spaces of analytic functions in the disk that satisfy a fixed set of five natural axioms. We will prove two results: one characterizing all weighted composition operators among the bounded operators on such a space, and another, characterizing the invertible WCOs among the bounded ones.
\subsection{A characterization of weighted composition operators on spaces of the disk}
 \label{subsec-wco-charact}
\par
The unilateral shift operator $S$, defined by  $Sf(z)=z f(z)$, for $z\in\D$ and $f\in X$, makes sense and is often bounded on many functional Banach spaces $X\subset \ch(\D)$. Clearly, $S=M_z$.
\par
Throughout the whole section, we will consider Banach spaces $X\subset \ch(\D)$ that satisfy the following set of axioms:
\par
\begin{description}
\item[Ax1]
 All point evaluation functionals $\La_z$ are bounded on $X$.
\item[Ax2]
 The set of all algebraic polynomials of $z$ is contained in $X$ and dense in it (in $\|\cdot\|_X$).
\item[Ax3]
 The shift operator $S=M_z$ is bounded on $X$.
\item[Ax4]
 $\limsup_{n\to\infty} \|z^n\|^{1/n}= 1$.
\item[Ax5]
 Every disk automorphism induces a bounded composition operators on $X$.
\end{description}
\par
Each one of these axioms appears in a most natural way in one context or another. The disk algebra $\ca$, the Hardy spaces $H^p$ and the Bergman spaces $A^p$, $1\le p<\infty$, satisfy all five axioms. More generally, the spaces $H(p,q,\alpha)$ verify them when $1\le p\le\infty$ and $1\le q<\infty$. Also, the little Bloch space $\cb_0$ and analytic Besov spaces $B^p$, $1<p<\infty$, satisfy all of these axioms (see \cite[Chapter~5]{Zh1}, \cite{HW}, or \cite{BV}).
\par
The weighted spaces $H^2 (\beta)$ satisfy Axiom~\textbf{[Ax1]} by \cite[Theorem~2.10]{CM} and clearly also Axiom~\textbf{[Ax2]} (since the Taylor polynomial of each function converges to the same function in the norm). They satisfy Axiom~\textbf{[Ax3]} if and only if $\sup_n \beta(n+1)/\beta(n)<\infty$ by \cite[Proposition~2.7]{CM} and Axiom~\textbf{[Ax4]} if and only if $\limsup_n \beta(n)^{1/n}=1$ by a trivial calculation; a simple example of a coefficient weight that satisfies both is $\beta (n)=n^\alpha$, $\alpha\ge 0$ or $\beta (n)=(n+1)^\alpha$, $\alpha< 0$. The weighted space $H^2 (\beta)$ with $\beta (n)=2^n$ fails to satisfy Axiom~\textbf{[Ax5]}, as \cite[Exercise~3.1.5]{CM} shows, while the Dirichlet space $B^2=H^2 (\beta)$ with $\beta (n)=\sqrt{n}$ does.
\par
The non-separable spaces such as $\cb$, $H^\infty$, and also $H^\infty_v$ for certain weights fail to satisfy Axiom~\textbf{[Ax2]}.
\par\medskip
Theorem~\ref{thm-wco-charact}, to be proved in this subsection, provides a generalization of the well-known fact that in Hardy spaces or Bergman spaces the only operators that commute with the shift are the pointwise multiplication operators. It also generalizes a well-known characterization of composition operators on $H^2$ which states that they are the only multiplicative operators on this space, in the sense that $T(fg)= Tf\cdot Tg$ for all $f$, $g\in H^2$ such that also $f g\in H^2$; see \cite[Corollary~5.1.14, p.~170]{MAR}.
\par
Before stating and proving Theorem~\ref{thm-wco-charact} we will need a simple auxiliary statement, easy to prove and valid for arbitrary planar domains  \cite[Lemma~11, p.~57]{DuS}; however, we will state it only for the disk.
\begin{lem} \label{lem-merom}
Let $g\in \ch(\D)$, $g\not\equiv 0$, let $H$ be meromorphic in $\D$, and assume that $g H^n\in\ch(\D)$ for all $n\ge 1$. Then $H\in \ch(\D)$.
\end{lem}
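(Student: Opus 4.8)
The plan is to argue by contradiction and to reduce the global claim to a purely local comparison of vanishing and pole orders at a single point. Suppose $H$ is \emph{not} analytic in $\D$. Since $H$ is meromorphic, it then has at least one pole, and its set of poles is discrete; I would fix one such pole $z_0\in\D$ and let $p\ge 1$ denote its order.

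Next I would localize at $z_0$. Because $g\not\equiv 0$ is analytic, its zeros are isolated and of finite order, so $g$ vanishes at $z_0$ to some finite order $m\ge 0$ (with $m=0$ if $g(z_0)\neq 0$). Writing $g(z)=(z-z_0)^m g_0(z)$ and $H(z)=(z-z_0)^{-p}H_0(z)$ with $g_0,H_0$ analytic and nonzero at $z_0$, one reads off
$$
 g(z)\,H(z)^n=(z-z_0)^{m-np}\,g_0(z)\,H_0(z)^n,
$$
and since $g_0 H_0^n$ is nonvanishing at $z_0$, the product $gH^n$ has a genuine pole at $z_0$ precisely when $np>m$.

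The decisive step is then immediate: as $p\ge 1$, we have $np\to\infty$ while $m$ is a fixed finite integer, so $np>m$ for all sufficiently large $n$. For such $n$ the function $gH^n$ fails to be analytic at $z_0$, contradicting the hypothesis that $gH^n\in\ch(\D)$ for every $n\ge 1$. Hence $H$ can have no pole in $\D$ and is therefore analytic.

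I expect no serious obstacle here; the only thing to keep straight is the bookkeeping of orders at a common point. The essential observation, and the reason the statement holds, is that away from the poles of $H$ the products $gH^n$ are automatically analytic, so the whole constraint comes from the poles, where a single fixed zero of $g$ (of finite order $m$) simply cannot absorb the unboundedly growing pole order $np$ of $H^n$.
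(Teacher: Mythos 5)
Your proof is correct. Note that the paper itself contains no proof of this lemma: it is quoted as a known, ``easy to prove'' statement from \cite[Lemma~11, p.~57]{DuS}, so there is nothing internal to compare against; your local order-counting argument --- a pole of $H$ of order $p\ge 1$ at $z_0$ would force the fixed, finite vanishing order $m$ of $g$ at $z_0$ to exceed $np$ for every $n$, which is impossible since $g\not\equiv 0$ --- is precisely the standard argument behind that citation, and it is complete as written.
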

\par
It will become obvious from the proof that the extra assumption on the norms of the monomials, together with an assumption on kernel of $T$, forces the function $\vf=Tz/(T1)$ to become an analytic self-map of the disk, which is the key to the validity of the result. It should be noted that Axiom~\textbf{[Ax5]} is not required in the statement and also that we do not exclude the trivial case of constant self-maps of the disk: $\vf\equiv c$, $|c|<1$.
\begin{thm} \label{thm-wco-charact}
Let $X\subset\ch(\D)$ be a functional Banach space in which the axioms \textbf{[Ax1] - [Ax4]} are fulfilled and let $T$ be a continuous operator on $X$ such that the function $z-\la \not\in \mathrm{ker} T$ for any unimodular number $\la$. Then the following conditions are equivalent:
\begin{itemize}
 \item[(a)]
$T$ is a weighted composition operator;
 \item[(b)]
There exists $\,\vf\in\ch(\D)$ such that $\,\vf(\D)\subset\D$ and $M_\vf T = T S$;
\item[(c)]
There exists $\,\vf$, meromorphic in $\D$, such that $M_\vf T = T S$;
 \item[(d)]
There exists $\,\vf$, meromorphic in $\D$, such that $M_{\vf^n} T = T S^n$ for all integers $n\ge 0$;
 \item[(e)]
There exists $\,\vf$, meromorphic in $\D$, such that $\vf^n \cdot T1 = T (z^n)$ for all integers $n\ge 0$;
\item[(f)]
There exists $\,\vf\in\ch(\D)$ such that $\,\vf(\D)\subset\D$ and $\vf^n \cdot T1 = T (z^n)$  for all integers $n\ge 0$;
\item[(g)]
$T1\cdot T(fg) = Tf\cdot Tg$ holds for all functions $f$, $g\in X$ for which $fg\in X$ as well.
\end{itemize}
If any of the conditions (a)--(g) is fulfilled, then $\vf=Tz/(T1)$ is the composition symbol of $T$.
\end{thm}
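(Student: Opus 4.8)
The plan is to prove the chain (a) $\Rightarrow$ (b) $\Rightarrow$ (c) $\Rightarrow$ (d) $\Rightarrow$ (e) $\Rightarrow$ (f) $\Rightarrow$ (a) together with the side loop (a) $\Rightarrow$ (g) $\Rightarrow$ (e), so that all seven conditions become equivalent and the candidate symbol $\varphi = Tz/(T1)$ emerges uniformly. The first thing I would record is that $T1 \not\equiv 0$: in the (e)-hypothesis, $T1 \equiv 0$ would force $T(z^n) = \varphi^n\cdot 0 = 0$ for all $n$, and in the (g)-hypothesis, testing on polynomials $f=g=p$ gives $(Tp)^2 = T1\cdot T(p^2) \equiv 0$; either way $T$ annihilates every monomial, hence every polynomial, hence (by density, \textbf{[Ax2]}, and continuity of $T$) $T\equiv 0$, contradicting $z-\lambda \notin \ker T$. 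Thus $\varphi = Tz/(T1)$ is a well-defined meromorphic function throughout.

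The formal implications are short. For (a) $\Rightarrow$ (b), writing $Tf = F(f\circ\varphi)$ gives $T(zf) = F\,\varphi\,(f\circ\varphi) = \varphi\,Tf$, i.e.\ $TS = M_\varphi T$ with $\varphi$ an analytic self-map; evaluating at $f=1$ identifies $F = T1$ and $\varphi = Tz/(T1)$. Implication (b) $\Rightarrow$ (c) is immediate. For (c) $\Rightarrow$ (d) I would iterate $M_\varphi T = TS$, pushing one factor at a time: $\varphi^n Tf = \varphi^{n-1}T(zf) = \cdots = T(z^n f)$, i.e.\ $M_{\varphi^n}T = TS^n$; then (d) $\Rightarrow$ (e) is evaluation at $f=1$. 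The multiplicative branch is equally direct: (a) $\Rightarrow$ (g) is the pointwise identity $T1\cdot T(fg) = F\cdot F(f\circ\varphi)(g\circ\varphi) = Tf\cdot Tg$ with $F = T1$, and (g) $\Rightarrow$ (e) is an induction, since taking $f=z$, $g=z^n$ in (g) gives $T1\cdot T(z^{n+1}) = Tz\cdot T(z^n) = \varphi^{n+1}(T1)^2$, whence $T(z^n) = \varphi^n\,T1$ for every $n$.

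The real content, and the step I expect to be the main obstacle, is (e) $\Rightarrow$ (f): upgrading the merely meromorphic $\varphi$ to an analytic self-map of $\D$. Analyticity comes from Lemma~\ref{lem-merom}: with $g := T1 \not\equiv 0$ and $g\,\varphi^n = T(z^n) \in \ch(\D)$ for all $n$, the lemma forces $\varphi \in \ch(\D)$. To see $\varphi(\D)\subset\D$ I would combine \textbf{[Ax4]} with the boundedness of point evaluations: for any fixed $z_0$ with $T1(z_0)\neq 0$ the estimate $|T(z^n)(z_0)| \le \|\La_{z_0}\|\,\|T\|\,\|z^n\|_X$ and $\limsup_n\|z^n\|_X^{1/n}=1$ give $\limsup_n |T(z^n)(z_0)|^{1/n}\le 1$; but $|T(z^n)(z_0)|^{1/n} = |\varphi(z_0)|\,|T1(z_0)|^{1/n}\to|\varphi(z_0)|$, so $|\varphi(z_0)|\le 1$. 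Since the zeros of $T1$ are isolated, continuity extends $|\varphi|\le 1$ to all of $\D$. The maximum modulus principle then forbids an interior value of modulus one unless $\varphi$ is a unimodular constant $\lambda$; but $\varphi\equiv\lambda$ with $|\lambda|=1$ would give (via (e) with $n=1$) $Tz = \lambda\,T1$, i.e.\ $T(z-\lambda)=0$, contradicting the hypothesis on $\ker T$. Hence $\varphi(\D)\subset\D$, which is (f).

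It remains to close the cycle with (f) $\Rightarrow$ (a), a routine density and normal-families argument: by linearity (f) gives $Tp = T1\cdot(p\circ\varphi)$ for every polynomial $p$; given $f\in X$, choose polynomials $p_k\to f$ in norm (\textbf{[Ax2]}), so $Tp_k\to Tf$ and $p_k\to f$ uniformly on compacta (\textbf{[Ax1]}), whence $p_k\circ\varphi\to f\circ\varphi$ locally uniformly because $\varphi(K)$ is a compact subset of $\D$ for each compact $K\subset\D$ — here $\varphi(\D)\subset\D$ is essential — and passing to the limit yields $Tf = T1\cdot(f\circ\varphi) = M_F C_\varphi f$ with $F=T1$. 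Since $\varphi = Tz/(T1)$ is exactly the symbol produced in each of the implications above, the final assertion of the theorem follows automatically.
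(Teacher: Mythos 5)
Your proposal is correct and follows essentially the same route as the paper's proof: the identical chain (a)\,$\Rightarrow$\,(b)\,$\Rightarrow$\,(c)\,$\Rightarrow$\,(d)\,$\Rightarrow$\,(e)\,$\Rightarrow$\,(f)\,$\Rightarrow$\,(a) plus (a)\,$\Rightarrow$\,(g)\,$\Rightarrow$\,(e), with Lemma~\ref{lem-merom} upgrading $\vf$ to analyticity, \textbf{[Ax1]}+\textbf{[Ax4]} giving $|\vf|\le 1$, and the maximum modulus principle plus the kernel hypothesis ruling out unimodular constants. If anything, you are slightly more careful than the paper in two spots --- excluding $T1\equiv 0$ under hypothesis (g) via $(Tp)^2=T1\cdot T(p^2)$, and extending $|\vf|\le 1$ across the isolated zeros of $T1$ by continuity --- but these are refinements of the same argument, not a different one.
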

\begin{proof} It suffices to prove the chain of implications (a) $\Rightarrow$ (b) $\Rightarrow$ (c) $\Rightarrow$ (d) $\Rightarrow$ (e) $\Rightarrow$ (f) $\Rightarrow$ (a) and then also (a) $\Rightarrow$ (g) $\Rightarrow$ (e).
\par\medskip
\fbox{(a) $\Rightarrow$ (b)} \ Let $T=M_F C_\vf$. Then for any $f$ in $X$ we have $M_\vf T f = F \vf (f\circ \vf) = T S f$, hence $M_\vf T = T S$ holds. (Note that this step involves using \textbf{[Ax3]}.)
\par\medskip
\fbox{(b) $\Rightarrow$ (c)} \ This implication is trivial.
\par\medskip
\fbox{(c) $\Rightarrow$ (d)} \ The statement is obvious for $n=0$ and we also have it for $n=1$ by (c). Now $M_\vf T = T S$ implies $T S^2 = M_\vf T S = M_\vf M_\vf T = M_{\vf^2} T$. Proceeding by induction, we obtain $T S^n = M_{\vf^n} T$ for all $n\ge 0$.
\par\medskip
\fbox{(d) $\Rightarrow$ (e)} \ Follows trivially by applying both sides of the equality in (d) to the constant function one.
\par\medskip
\fbox{(e) $\Rightarrow$ (f)} \ First, we have to distinguish between two possibilities: $T1\equiv 0$ and $T1\not\equiv 0$. The possibility of $T1\equiv 0$ is easily excluded: in this case, (e) implies that also $Tz\equiv 0$. But this possibility is excluded by our assumption on the kernel of $T$.
\par
Now, knowing that $T1\not\equiv 0$, it follows from the assumption (e)  for  $n=1$ that $\vf=Tz/(T1)$, a meromorphic function. Since $T(z^n)= T1 \cdot \vf^n$ holds for all integers $n\ge 1$, it follows from Lemma~\ref{lem-merom} that actually $\vf\in\ch(\D)$.
\par
It is only left to  show that $|\vf(\z)|<1$ for all $\z\in\D$. Since $T1\cdot \vf^n\in X$ for all $n\ge 1$, by the boundedness of the point-evaluation functionals for an arbitrary but fixed $\z$ in $\D$ (by virtue of \textbf{[Ax1]}) we get
$$
 |(T1\cdot \vf^n) (\z)|\le \|\La_\z\|\cdot \|T1\cdot \vf^n\| = \|\La_\z\| \cdot \|T (z^n)\| \le \|\La_\z\|\cdot \|T\| \cdot \|z^n\|\,.
$$
Taking the $n$-th roots of both sides and then $\limsup_{n\to\infty}$, using condition \textbf{[Ax4]} we first get that  $|\vf(\z)|\le 1$ for all $\z\in\D$.
\par
If $|\vf(\z)|=1$ for some $\z\in\D$ then by the maximum modulus principle $\vf$ is identically constant, say $\vf(z)\equiv c$, $|c|=1$. But then $Tz= c T1$ and thus $z-c \in \mathrm{ker} T$, contrary to our assumption. Thus, $\vf$ is an analytic function in $\D$ which maps $\D$ into itself.
\par\medskip
\fbox{(f) $\Rightarrow$ (a)} \  Suppose that $\vf^n \cdot T1 = T (z^n)$ for some analytic self-map $\vf$ of $\D$ and for all integers $n\ge 0$. We need to show that $T$ is a weighted composition operator, namely, $Tf=F\cdot (f\circ\vf)$ for this same mapping $\vf$ and $F=T1$.
\par
By assumption, $T(z^n)=T1\cdot\vf^n=F\cdot \vf^n$ for all integers $n\ge 0$, hence by linearity $Tp=F\cdot (p\circ\vf)$ for every polynomial $p$. Now let $f$ be an arbitrary function in $X$ and $(p_n)_n$ a sequence of polynomials convergent to $f$ in the norm of $X$ (which exists by \textbf{[Ax2]}). Then, by continuity of $T$, we have $Tp_n\to Tf$ in $X$ and since $Tp_n=F\cdot (p_n\circ\vf)$, it follows that $F\cdot (p_n\circ\vf)\to Tf$ in $X$, and hence pointwise as well. That is, $Tf(z)=F(z)\cdot (f\circ\vf)(z)$ at every point $z$ in $\D$.
\par\medskip
\fbox{(a) $\Rightarrow$ (g)} \  Let $Tf=F (f\circ \vf)$. It is readily  checked that (g) holds as both sides of the equality $T1\cdot T(fg) = Tf\cdot Tg$ are equal to $F^2 (f\circ \vf) (g\circ \vf)$.
\par\medskip
\fbox{(g) $\Rightarrow$ (e)} \  Assume that $T1\cdot T(fg) = Tf\cdot Tg$ holds whenever $f$, $g$, $fg\in X$. It is clear that the identity in (e) holds trivially for $n=0$ and that it holds for $n=1$ for $\vf=Tz/(T1)$, clearly a meromorphic function in $\D$ since $Tz$, $T1 \in X$. We now prove that $\vf^n \cdot T1 = T (z^n)$ by induction on $n\ge 1$. Assuming that $T(z^n)= \vf^n \cdot T1 = \frac{(Tz)^n}{(T1)^{n-1}}$ for some $n\ge 1$, we get from (g) with $f(z)=z^n$ and $g(z)=z$ and from the inductive hypothesis that
$$
 T(z^{n+1})= \frac{T(z^n) T(z)}{T1} = \frac{(Tz)^{n+1}}{(T1)^n} = \vf^{n+1} \cdot T1\,,
$$
which completes the inductive proof.
\end{proof}
\par
An inspection of the proof shows that we have effectively used all assumptions on $X$ and on $T$. It should be clear that they are all really needed but we still include some examples to illustrate this.
\par
The following example shows that the statement is no longer true if we omit the assumption on $\mathrm{ker} T$.
\par
\begin{ex} \label{ex-rank}
Let $X=\ca=H^\infty(\D)\cap C(\overline{\D})$ be the disk algebra, that is, the space of all functions analytic in the unit disk and continuous up to the boundary. It is easy to check that this space satisfies \textbf{[Ax1]}--\textbf{[Ax5]}. Let $T$ be the rank one operator on $\ca$ given by $Tf=f(1) F_0$, where $F_0\in\ca$ is a non-zero function. Then clearly $Tz=F_0=T1$, hence $z-1\in \mathrm{ker} T$, so this is the only assumption in Theorem~\ref{thm-wco-charact} that is not satisfied. We will now check that in this situation the result no longer holds.
\par
Clearly, $M_\vf T = T S$ is satisfied for the function $\,\vf\equiv 1$ but there does not exist $\,\vf\in \ch(\D)$ with $\,\vf(\D)\subset\D$ such that $\vf^n \cdot T1 = T (z^n)$ for all integers $n\ge 0$ (as any such $\vf$ would have to be identically one). In other words, although the conditions (b)-(e) are satisfied, (f) is not.
\par
One can also check directly that $T$ is not a weighted composition operator. For example, substituting the constant function one into $f(1) F_0 = T_{F,\vf}f = F (f\circ\vf)$ yields that $F\equiv F_0$ and a further substitution $f(z)=z$ into $f(1) F_0 = F_0 (f\circ\vf)$ quickly leads to a contradiction.
\par
\end{ex}
\par
The following example shows that the result is false if the assumption \textbf{[Ax4]} is not fulfilled. In this case, the remaining axioms \textbf{[Ax1]} - \textbf{[Ax3]} are valid in $X$ and, in spite of the fact that the representation $Tf=F\cdot (f\circ\vf)$ in view of (d) holds on a dense subset of $X$, namely for all polynomials, $T$ cannot be represented in this fashion on the whole space.
\par
\begin{ex} \label{ex-non-WCO}
Let $X$ be the space from Example~\ref{ex-restr-fock} formed by restrictions of functions in the Fock space to the disk and let $T$ be the linear operator given by
$$
 Tf(z) = f\(\frac{z}{2}+1\)\,, \quad f\in X, \quad z\in\D\,.
$$
As some very simple integral estimates show, this is a bounded operator on $X$. (A more general result characterizing the symbols of all bounded composition operators on the Fock space can be found in \cite{CMS}.)
\par
It is trivial to check that $T$ satisfies condition (c) of Theorem~\ref{thm-wco-charact} with $\vf(z)=Tz/(T1)=\frac{z}{2}+1$. However, $\vf$ is not a self-map of $\,\D$ so other conditions do not hold. Thus, even though the formal relation $T=M_1 C_\vf= C_\vf$ holds, $T$ is not a weighted composition operator on the disk in the sense considered here -in view of the fact that $\vf$ does not map $\D$ into itself.
\par
It is important to note that there does not exist any other representation of $T$ as a WCO with an admissible pair of symbols. Indeed, if we had $T=C_\vf=M_F C_\psi$ for some $F\in\ch(\D)$ and some analytic self-map $\,\psi$ of $\,\D$, then taking first $f\equiv 1$ and then $f(z)=z$ in $f\circ\vf=F\cdot (f\circ\psi)$, this would imply  $\,F\equiv 1$ and $\psi=\vf$ respectively.
\end{ex}
\par
Our next example shows that the assumption \textbf{[Ax2]} is also essential. Indeed, if $X$ does not contain the constant functions (even if the polynomials lacking the constant term are dense in the space), the conclusion of the result fails.
\par
\begin{ex} \label{ex-isom}
Let $X=A^2_0=\{f\in A^2\,\colon\,f(0)=0\}$, the subspace of the Bergman space $A^2$ of codimension one equipped with inherited norm. It is clear that this is a Hilbert space and \textbf{[Ax1]}, \textbf{[Ax3]}, and \textbf{[Ax4]} are satisfied. Also, $X$ contains all polynomials that vanish at the origin and it is easy to see that they are actually dense in $X$. However, the constant function one does not belong to $X$.
\par
If we define the operator $T$ by $Tf(z)=f^\prime(0) z$, it is clear that this is a linear rank one operator from $X$ into itself and satisfies the relation $M_\vf T = TS = 0$ for the constant function $\vf\equiv 0$ (which trivially maps $\D$ into itself). However, the operator $T$ defined in this fashion cannot be written as a WCO in any way. In fact, if we had $F(z)\cdot f(\vf(z))=f^\prime(0) z$ for some $F$ analytic in $\D$ and some analytic mapping $\vf$ of $\D$ into itself, after substituting $f(z) \equiv z$ we would get $F(z) \vf(z)\equiv z$ while substituting $f(z) \equiv z^2$ would yield $F \vf^2\equiv 0$, hence either $F\equiv 0$ or $\vf \equiv 0$, immediately leading to a contradiction.
\end{ex}
\medskip

\subsection{Invertibility of WCOs in spaces on the disk}
 \label{subsec-bourdon}
\par
We already know that in certain ``natural'' spaces the inverse of an invertible composition operator is again a composition operator though this does not hold in general; \textit{cf.\/} \cite[Exercises 2.1.14, 3.1.6]{CM}. We now address the following question: when is a weighted composition operator invertible? A rule of thumb should suggest that the composition symbol should be an automorphism of the domain and the multiplication symbol should be invertible (bounded from above and below) or, alternatively, self-multipliers of the space.
\par
Statements of this exact type have already appeared in the recent literature. Gunatillake \cite{G} studied the spectrum of weighted composition operators with automorphic symbols acting on the Hardy space $H^2$. As a motivation for this, in the same paper he also showed that such an operator is invertible on $H^2$ if and only if the composition symbol is an automorphism and the multiplication symbol is bounded from above and from below.
\par
Two generalizations were obtained subsequently: Bourdon \cite{Bo}  obtained a version of the last statement in the context of sets of analytic functions in the disk (with no linear structure at all) that satisfy certain axioms. He then applied his findings to certain specific spaces of the disk. Hyvärinen-Lindström-Nieminen-Saukko \cite{HLNS} obtained a generalization to other spaces of analytic functions on the disk with certain growth control.
\par
In this subsection we consider a different set of axioms and prove similar invertibility results which complement the earlier results. If compared with Bourdon's result \cite[Theorem~2.2]{Bo} which applies in a very general context of sets of functions without linear structure but with certain boundary properties, our result below gives an impression of being less general. However, we have neither been able to prove rigorously that our axioms \textbf{[Ax1] - [Ax4]} imply Bourdon's from \cite[Theorem~2.2]{Bo} nor to give an example of a space that satisfies our axioms but does not satisfy his. The main interest of the theorem may reside in the method of proof.
\begin{thm} \label{thm-invert-comp-disk}
Let $X\subset\ch(\D)$ be a functional Banach space in which the axioms \textbf{[Ax1] - [Ax4]} are satisfied, as in Theorem~\ref{thm-wco-charact}, and suppose that a weighted composition operator $T_{F,\vf}$ is bounded in $X$.
\par
(a) If $T_{F,\vf}$ is invertible in $X$ then its composition symbol $\vf$ is an automorphism of \ $\D$, the multiplication symbol $\,F$ does not vanish in the disk, and the inverse operator $T_{F,\vf}^{-1}$ is another weighted composition operator $T_{G,\psi}$, whose symbols are:
\begin{equation}
 G=\frac{1}{F\circ \vf^{-1}}, \qquad \psi=\vf^{-1}\,.
 \label{inv-symb}
\end{equation}
\par
(b) Assuming that Axiom~\textbf{[Ax5]} also holds, we have the following characterization.
\par
The weighted composition operator $T_{F,\vf}$ is invertible on $X$ if and only if its composition symbol $\vf$ is an automorphism of \ $\D$, the multiplication symbol $\,F$ does not vanish in the disk, and $1/F\in\cm (X)$. If this is the case, then $F$ is also a self-multiplier of $X$ and the inverse operator is $T_{G,\psi}$, with the symbols given by \eqref{inv-symb}.
\end{thm}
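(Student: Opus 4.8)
The plan is to derive the whole statement from the already-established Theorem~\ref{thm-wco-charact}, applied not to $T=T_{F,\vf}$ itself but to its inverse $\sigma:=T^{-1}$.

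\emph{Part (a).} I begin with two quick reductions. Since $X$ is infinite dimensional (it contains all polynomials by \textbf{[Ax2]}), the symbol $\vf$ cannot be constant: a constant $\vf\equiv c$ would make $T_{F,\vf}f=f(c)F$ of rank one, hence non-invertible. Next, writing $F=T1$, if $F$ vanished at some $z_0\in\D$ then every function $T_{F,\vf}f=F\,(f\circ\vf)$ would vanish at $z_0$; as $T$ is onto and $1\in X$ with $1(z_0)\neq 0$, this is impossible, so $F$ is zero-free on $\D$.

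The heart of the argument is to show that $\sigma$ is itself a weighted composition operator, by verifying condition (g) of Theorem~\ref{thm-wco-charact} for $\sigma$. The engine is the pointwise inverse formula: for $f\in X$, the function $h=\sigma f\in X$ satisfies $F\cdot(h\circ\vf)=f$, that is,
\[
 (\sigma f)\bigl(\vf(\z)\bigr)=\frac{f(\z)}{F(\z)},\qquad \z\in\D .
\]
Applying this with $f=1$, $f=u$, $f=v$ and $f=uv$ (for $u,v\in X$ with $uv\in X$) and multiplying, one finds that $\bigl(\sigma 1\cdot\sigma(uv)\bigr)\circ\vf$ and $\bigl(\sigma u\cdot\sigma v\bigr)\circ\vf$ are both equal to $uv/F^{2}$ on all of $\D$. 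Since $\vf$ is non-constant, $\vf(\D)$ is open, so the two analytic functions $\sigma 1\cdot\sigma(uv)$ and $\sigma u\cdot\sigma v$ agree on $\vf(\D)$ and hence, by the identity theorem, on all of $\D$. Thus $\sigma$ satisfies (g); being bounded and injective (so $z-\la\notin\ker\sigma$ trivially for every unimodular $\la$), Theorem~\ref{thm-wco-charact} applies and gives $\sigma=T^{-1}=T_{G,\psi}$ for an analytic self-map $\psi$ of $\D$, with $G=\sigma 1$ and $\psi=\sigma z/(\sigma 1)$. To finish, I write out $T_{F,\vf}T_{G,\psi}=I=T_{G,\psi}T_{F,\vf}$: evaluating the first relation on $1$ gives $F\cdot(G\circ\vf)=1$, and evaluating it on $f(z)=z$ then forces $\psi\circ\vf=\mathrm{id}$, while the symmetric relation gives $\vf\circ\psi=\mathrm{id}$. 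Hence $\vf$ is an automorphism of $\D$ with analytic inverse $\psi=\vf^{-1}$, and $G\circ\vf=1/F$ yields $G=1/(F\circ\vf^{-1})$, which is exactly \eqref{inv-symb}.

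\emph{Part (b).} Under the additional axiom \textbf{[Ax5]}, every automorphism induces a bounded composition operator, so $C_\vf$ and $C_{\vf^{-1}}$ are bounded and mutually inverse. For the forward implication I would write $M_F=T_{F,\vf}C_{\vf^{-1}}$, which is then bounded (so $F\in\cm(X)$) and invertible with $M_F^{-1}=C_\vf T^{-1}$; since the inverse of a bounded invertible multiplication operator is multiplication by $1/F$, this gives $1/F\in\cm(X)$. For the converse, assuming $\vf$ an automorphism, $F$ zero-free and $1/F\in\cm(X)$, I would prove that the candidate $T_{G,\psi}$ from \eqref{inv-symb} is bounded: $C_\psi$ is bounded by \textbf{[Ax5]}, and the multiplier $G=(1/F)\circ\vf^{-1}$ is handled through the intertwining identity $M_{(1/F)\circ\psi}=C_\psi M_{1/F}C_\psi^{-1}$, a product of bounded operators; a direct pointwise computation then shows $T_{G,\psi}$ is a two-sided inverse of $T_{F,\vf}$.

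The conceptual crux of the whole proof is the step in part (a) that produces condition (g) for $T^{-1}$. The naive attempt to transfer the multiplicativity relation from $T$ to $\sigma$ fails, because $\sigma u\cdot\sigma v$ need not belong to $X$; the decisive point is to read (g) instead as an \emph{identity between two analytic functions}, which one verifies on the open set $\vf(\D)$ using the pointwise inverse formula and then propagates to all of $\D$ by analytic continuation. Everything else is either a short reduction or routine bounded-operator bookkeeping resting on Theorem~\ref{thm-wco-charact} and, for part (b), on \textbf{[Ax5]}.
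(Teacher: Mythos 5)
Your proposal is correct and takes essentially the same approach as the paper: both arguments apply Theorem~\ref{thm-wco-charact} to the inverse operator $T^{-1}$, verifying one of its equivalent conditions by means of the pointwise inverse identity $F\cdot\bigl((T^{-1}f)\circ\vf\bigr)=f$ on the open set $\vf(\D)$ and then extending to all of $\D$ by the identity theorem (the paper checks condition (e) on the monomials where you check condition (g) --- an immaterial difference). The remaining steps coincide as well: deriving $\vf\circ\psi=\psi\circ\vf=\mathrm{id}$ and the formula \eqref{inv-symb} by evaluating $T T^{-1}=T^{-1}T=I$ at $1$ and at $z$, and in part (b) obtaining $F,\,1/F\in\cm(X)$ and the boundedness of $T_{G,\psi}$ from Axiom~\textbf{[Ax5]} via the same factorizations such as $M_F=T_{F,\vf}C_{\vf^{-1}}$ and $M_G=C_\psi M_{1/F}C_\vf$.
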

\par
Note that the condition $F\in\cm(X)$ is not needed in the proof of (b), hence it is not listed among the hypotheses. It actually follows easily from the remaining assumptions. This may seem paradoxical but can be explained by the fact that we are assuming from the start that $T_{F,\vf}$ acts boundedly so in certain multiplications the boundedness of $M_F$ is not really required.
\begin{proof}
(a) To simplify the notation, write $T$ for our operator $T_{F,\vf}$ and $U$ for its inverse. Then $TU=UT=I$, where $I$ is the identity operator on $X$.
\par
We first make sure that the possibility $Uz=\la\cdot U1$ (for some fixed $\la$ with $|\la|=1$) is ruled out: indeed, if this happens then $z= TUz = \la TU1 = \la$ for all $z\in\D$, which is absurd. This shows the hypotheses of Theorem~\ref{thm-wco-charact} are all satisfied so we can apply the statement.
\par
We want to use part (e) of Theorem~\ref{thm-wco-charact} to conclude that $U$ is also a WCO. To this end, we ought to show that
\begin{equation}
 U(z^n) = U1 \cdot \(\frac{Uz}{U1}\)^n\,, \quad n\ge 2\,.
 \label{U-T}
\end{equation}
First observe that
$$
 1 = TU1 = F \cdot (U1\circ\vf)
$$
which shows that neither $F$ nor $U1\circ\vf$ can vanish in $\D$ and
\begin{equation}
 F=\frac{1}{U1\circ\vf}\,.
 \label{F}
\end{equation}
Now, let $g=Uz$ and $h=U(z^n)$. Then $Tg= z$ and
$$
 F (h\circ\vf) = Th = z^n = (Tg)^n = F^n (g^n\circ\vf) \,.
$$
It follows from \eqref{F} that
$$
 (h\circ \vf) ((U1)^{n-1}\circ\vf) = g^n\circ \vf\,.
$$
Since $\vf$ is not identically constant, we deduce from the uniqueness principle for analytic functions that $h (U1)^{n-1} = g^n$ holds throughout $\D$. Thus, in view of the removable singularities,
$$
  h = \frac{g^n}{(U1)^{n-1}} = \(\frac{g}{U1}\)^n \cdot U1
$$
also holds in all of $\D$. By the way $g$ and $h$ were defined, this means that \eqref{U-T} holds. Now Theorem~\ref{thm-wco-charact} implies that $U$ is a WCO.
\par\medskip
Next, knowing that $U=T_{G,\psi}$ for some $G\in\ch(\D)$ and some analytic self-map $\psi$ of $\D$, we find an explicit formula for $U$. Starting from
$$
 1 = U T 1 = U(F) = G \cdot (F\circ\psi)
$$
it is clear that $G$ does not vanish in $\D$. Using also the representation of $T=T_{F,\vf}$ and the fact that $f(z)=z$ is also a function in $X$, we obtain
$$
 z = U T z = U(F\cdot\vf) = G \cdot (F\circ\psi) (\vf\circ\psi) = UF \cdot (\vf\circ\psi) = \vf\circ\psi
$$
so $\vf\circ\psi=id$, the identity mapping of $\D$. By a similar reasoning, we also deduce that $\psi\circ \vf=id$. Therefore both $\vf$ and $\psi$ are bijective mappings of $\D$, hence they are disk automorphisms and mutually inverse.
\par
As for the symbols of $U=T^{-1}$, the above reasoning shows that
$\psi=\vf^{-1}$ and from \eqref{F} we get
$$
 G = U1 = (U1 \circ\vf) \circ\psi = \frac{1}{F\circ\psi} = \frac{1}{F\circ\vf^{-1}}\,.
$$
\par\medskip
(b) There are two implications to be proved.
\par\smallskip
\fbox{$\Rightarrow$} \ First suppose that $T=T_{F,\vf}$ is invertible. By what we have already proved in part (a) using only axioms~\textbf{[Ax1] - [Ax4]}, we know that $F$ does not vanish in $\D$, that $U=T^{-1}$ is also a WCO and we know that its symbols are given by \eqref{inv-symb}. So, it is only left to show that $1/F\in\cm (X)$, assuming also Axiom~\textbf{[Ax5]}.
\par
To this end, we first observe that $F\in\cm(X)$. By Axiom~\textbf{[Ax5]}, disk automorphisms induce bounded composition operators. Hence $C_\psi$ is bounded on $X$ and so is the operator $T_{F,\vf} C_\psi = M_F$. This shows that $F\in\cm(X)$.
\par
Similarly, considering $C_\vf$ instead of $C_\psi$, $U=T^{-1}$ instead of $T$ and $G=U1$ instead of $F=T1$, we see that $G\in\cm(X)$ as well.
\par
In order to show that $1/F\in\cm(X)$, let $f\in X$ be arbitrary. Since $C_\psi$ is bounded on $X$ it follows that $f\circ\psi\in X$, hence
$$
 G (f\circ\psi) = \frac{f\circ\psi}{F\circ\psi} \in X\,.
$$
Finally, since  $C_\vf$ is bounded on $X$, it follows that
$$
 \frac{f\circ\psi}{F\circ\psi}\circ\vf = \frac{f}{F} \in X\,.
$$
This shows that $1/F\in\cm(X)$.
\par\medskip
\fbox{$\Leftarrow$} \ Now assume that $T_{F,\vf}$ bounded on $X$, its composition symbol $\vf$ is an automorphism of $\D$, the multiplication symbol $\,F$ does not vanish in the disk, and $1/F\in\cm (X)$. We want to show that $T$ is invertible by checking that the operator $U=T_{G,\psi}$, with $G$ and $\psi$ given by \eqref{inv-symb}, acts boundedly on $X$ and $TU=UT=I$.
\par
In view of Axiom~\textbf{[Ax5]}, both operators $C_\vf$ and $C_\psi$ where $\psi=\vf^{-1}$, are bounded on $X$. Thus, if $f\in X$ then $f\circ\vf\in X$ and then also $(f\circ\vf)/F\in X$ since $1/F\in\cm (X)$. But $C_\psi$ maps $X$ to itself and therefore we also have that
$$
 \frac{f}{F\circ\psi} = \frac{f\circ\vf}{F}\circ\psi \in X\,.
$$
Since this holds for arbitrary $f\in X$, if follows that $G=1/(F\circ\psi)\in\cm (X)$. Now it is clear that the compositions
$$
 U T = M_G C_\psi M_F C_\vf\,, \qquad T U = M_F C_\vf M_G C_\psi
$$
make sense as operators mapping $X$ to itself, and a short and direct computation shows that both coincide with the identity operator $I$ on $X$.
\end{proof}
\par
\begin{ex} \label{ex-isom-unbded}
Consider the space $X=H^\infty_v$ with the non-radial weight $v(z)=|1-z|$. This space contains $H^\infty$ and also some unbounded functions such as $f(z)=\frac{1}{1-z}$. It is immediate that the point evaluations are bounded on $X$ (and uniformly bounded on compact sets), with
$$
 \frac12 \le \|\La_z\|\le \frac{1}{|1-z|}
$$
for all $z$ in $\D$ (for the first inequality, consider any non-zero constant function). By a standard normal family argument, it follows that $X$ is a Banach space. The shift operator is trivially bounded on $X$, so the space satisfies the axioms \textbf{[Ax1]}, \textbf{[Ax3]}, and \textbf{[Ax4]}. It can also be checked that it fails to satisfy \textbf{[Ax2]} and \textbf{[Ax5]} so we cannot expect Theorem~\ref{thm-invert-comp-disk} to hold automatically. In fact, some of the conclusions in part (b) do not follow, and even more can be said.
\par
It is easy to check that the rotations do not necessarily generate bounded operators on $X$. If $\vf(z)=-z$, then $C_\vf$ does not map $X$ into itself: the function given by $f(z)=\frac{1}{1-z}$ is in $X$ but $C_\vf f$ is not:
$$
 \|C_\vf f\|= \|f(-z)\| = \sup_{z\in\D} \left| \frac{1-z}{1+z} \right| = \infty\,.
$$
Also, the function $F$, being unbounded, cannot generate a bounded pointwise multiplier $M_F$ from $X$ into itself. Nonetheless, the weighted composition operator $T_{F,\vf}$ with
$$
 F(z)=\frac{1+z}{1-z}\,, \quad \vf(z)=-z
$$
is bounded on $X$. While such examples have appeared earlier in the literature, what seems remarkable about this situation is that our operator $T_{F,\vf}$ is even a surjective isometry and an involution:
$$
 \|T_{F,\vf}\| = \sup_{z\in\D} |1+z| |f(-z)| =  \sup_{w\in\D} |1-w| |f(w)| = \|f\|\,, \qquad T_{F,\vf}^{-1}=T_{F,\vf} \,.
$$
\end{ex}
\par

\section{Invertibility in spaces on general domains that satisfy five  axioms}
 \label{sec-invert-5axioms}
\par
In this section we work in a more general context of Banach spaces of analytic functions on a general bounded planar domain (without any connectivity assumptions). It should be noted that while the axioms in \cite{Bo} are quite general they refer necessarily to the disk so they are not suited for this general context. Our axioms -when applied to the spaces on the disk- are still weaker than those assumed in \cite{HLNS} and hence our result is more general. The  main idea is to avoid the use of Carleson measures and the technicalities typical of any individual space while relying on the properties of functions which are near extremal for the point evaluations.
\par
It should be noted that in this section we only prove a theorem on invertibility, different from Theorem~\ref{thm-invert-comp-disk}. Since the monomials are very special and so is the disk as a domain, it is not at all clear how an analogue of Theorem~\ref{thm-wco-charact} would look like on a general bounded domain.
\par\smallskip
In what follows we consider Banach spaces $X\subset \ch(\Omega)$ that satisfy the following set of axioms (ordered following certain similarity with the previous section):
\par
\begin{description}
\item[A1]
 All point evaluation functionals $\La_z$ are bounded on $X$.
\item[A2]
  $f_0\in X$, where $f_0(z)\equiv 1$.
\item[A3]
 The shift operator is bounded on $X$.
\item[A4]
 For every function $f\in X$ we have
$\displaystyle{\frac{|f(z)|}{\|\La_z\|}\to 0}$ \ as \  $\mathrm{dist\,}(z,\pd\Omega)\to 0$.
\item[A5]
  Each automorphism of $\Omega$ induces a bounded composition operator	 in $X$.
\end{description}
\par
One easily notices the similarities between the odd-numbered axioms in the above list with the corresponding ones from the previous section. Note that Axiom~\textbf{(A2)} is much weaker than the second axiom required earlier while Axiom~\textbf{(A4)} is different. It generalizes the well-known fact that in Hilbert spaces the normalized reproducing kernels tend to zero weakly: if we denote by $K_z$ the reproducing kernel at $z$, then
$$
 \frac{|f(z)|}{\|\La_z\|}=\frac{|\langle f,K_z\rangle|}{\|\La_z\|}= |\langle f,\frac{K_z}{\|\La_z\|}\rangle\big|\to 0\,.
$$
\par
Here are some spaces that verify the above axioms.
\begin{itemize}
 \item
The analytic Besov spaces $B^p$, $1<p<\infty$, verify all of the above axioms. Most are checked readily; for Axiom~\textbf{(A4)}, see \cite{HW} or \cite{BV}, for example.
 \item
Another important family of spaces that satisfy all five axioms~\textbf{(A1) - (A5)} is that of the  mixed norm spaces $H(p,q,\alpha)$ with $q<\infty$. That Axiom~\textbf{(A4)} holds was proved in the first author's paper \cite{Ar} while the validity of Axiom~\textbf{(A5)} was shown in her joint recent preprint with Contreras and Rodr\'{\i}guez-Piazza \cite{ACR}.
 \item
Many $H^2 (\beta)$ spaces of the disk also satisfy the five axioms. We have already discussed the validity of the first three axioms in Subsection~\ref{subsec-wco-charact}. A sufficient (integral) condition for Axiom~\textbf{(A5)} to hold is given by \cite[Theorem~3.5]{CM} while \cite[Theorem~2.10]{CM} combined with \cite[Theorem~2.17]{CM} shows that Axiom~\textbf{(A4)} is satisfied whenever $\sum_{n=1}^\infty \beta (n)^{-2}=\infty$.
\end{itemize}
\par\smallskip
It is quite routine to check from our axioms that if a weighted composition operator acts on a space:
$$
 T_{F,\vf}:X\to X\,, \qquad T_{F,\vf}f=F\cdot (f\circ \vf),
$$
where $F$ is analytic in $\Omega$ and $\vf$ is an analytic self-map of $\Omega$, then the operator is actually bounded. This follows from the closed graph theorem since norm convergence implies pointwise convergence by Axiom~\textbf{(A1)}.
\par\medskip
Note that in this context it is not clear at all what result, if any, should constitute an analogue of Theorem~\ref{thm-wco-charact} as the geometry of the disk and the role played by the monomials $z^n$ are quite special while here we are working with general bounded domains.
Thus, we prove only one result in this section: a theorem on invertibility of a WCO based on the five axioms above.
\par
We first need a simple topological lemma which states essentially that when we delete part of a domain, we inevitably add some boundary.
\par
\begin{lem} \label{lem-gen-top}
Let $\Omega$ be a bounded planar domain and $D$ a non-empty domain contained in $\Omega$, $D\neq \Omega$. Then $\pd D\cap \Omega\neq \emptyset$.
\end{lem}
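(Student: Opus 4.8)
The plan is to argue by contradiction using the connectedness of $\Omega$; boundedness plays no role in this particular statement and I would not invoke it. Suppose that $\pd D\cap\Omega=\emptyset$. The starting point is the partition of the plane into the interior, boundary, and exterior of $D$. Since $D$ is open, its interior coincides with $D$ itself, so
\[
 \C = D \,\sqcup\, \pd D \,\sqcup\, (\C\setminus\overline{D}),
\]
where the three sets are pairwise disjoint and exhaust the plane. Intersecting with $\Omega$, using $D\subseteq\Omega$ (so that $D\cap\Omega=D$) together with the standing assumption $\pd D\cap\Omega=\emptyset$, yields
\[
 \Omega = D \,\sqcup\, (\Omega\setminus\overline{D}).
\]

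Next I would observe that this exhibits $\Omega$ as a disjoint union of two open sets: $D$ is open because it is a domain, while $\Omega\setminus\overline{D}=\Omega\cap(\C\setminus\overline{D})$ is open as an intersection of two open sets. Since $\Omega$ is connected, one of the two pieces must be empty; as $D\neq\emptyset$, we are forced to conclude $\Omega\setminus\overline{D}=\emptyset$, that is, $\Omega\subseteq\overline{D}$. Combining this with $\pd D\cap\Omega=\emptyset$ and the identity $\overline{D}=D\sqcup\pd D$ gives $\Omega\subseteq\overline{D}\setminus\pd D=D$. Together with the hypothesis $D\subseteq\Omega$, this forces $D=\Omega$, contradicting $D\neq\Omega$.

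I do not expect any serious obstacle here: the whole content is the observation that the assumption $\pd D\cap\Omega=\emptyset$ makes $D$ a relatively clopen, nonempty, proper subset of the connected set $\Omega$, after which connectedness does all the work. The only points that require a little care are that the interior of $D$ equals $D$ (using that $D$ is open), which is what legitimizes the three-way partition of the plane, and that both pieces of the induced decomposition of $\Omega$ are genuinely open so that the connectedness dichotomy applies.
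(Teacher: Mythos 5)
Your proof is correct and rests on exactly the same idea as the paper's: the hypothesis $\pd D\cap\Omega=\emptyset$ would make $D$ a non-empty, proper, relatively clopen subset of the connected set $\Omega$, which is impossible (the paper argues this directly, noting $D$ cannot be closed in $\Omega$, while you phrase it as a contradiction via an explicit partition into two disjoint open sets). Your remark that boundedness of $\Omega$ is never used also matches the paper, whose proof likewise relies only on connectedness.
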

\begin{proof}
Since $\Omega$ is connected and $D$ is a proper non-empty subset of $\Omega$, it follows that $D$ is not closed in $\Omega$; that is, $D$ is a proper subset of $\overline{D}\cap \Omega$ (its closure in $\Omega$), where $\overline{D}$ denotes the closure of $D\,$ in $\C$. Hence,
$$
 \pd D\cap \Omega= (\overline{D}\setminus D)\cap \Omega= (\overline{D}\setminus \Omega)\cap D \neq \emptyset\,,
$$
and the proof is complete.
\end{proof}
\par
The following purely complex analysis statement may be of interest by itself. It should be compared with \cite[Corollary~2.10, p.~25]{Po}, a known statement with much more restrictive hypotheses and a bit stronger conclusion. It shows that for nicely behaved self-maps of a domain the behavior inside is somehow controlled by the behavior near the boundary. As is usual, we will denote by $d(z,\pd\Omega)$ the distance of the point $z$ to the boundary of $\Omega$ and will often write $z\to\pd\Omega$ to denote the fact that $d(z,\pd\Omega)\to 0$.
\begin{thm} \label{thm-self-map}
Let $\Omega$ be a bounded planar domain and $\vf$ a non-constant analytic self-map of $\/\Omega$ with the property that $d(\vf(z),\pd\Omega)\to 0$ whenever $d(z,\pd\Omega)\to 0$. Then $\vf(\Omega)=\Omega$.
\end{thm}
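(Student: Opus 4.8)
The plan is to argue by contradiction, reducing the surjectivity of $\vf$ to a statement about the boundary of the image domain. Since $\vf$ is non-constant and analytic on the connected set $\Omega$, the open mapping theorem guarantees that $D:=\vf(\Omega)$ is open, and being the continuous image of a connected set it is also connected; thus $D$ is a non-empty subdomain of $\Omega$. I would suppose, for contradiction, that $D\neq\Omega$. Then Lemma~\ref{lem-gen-top} applies and produces a point $w_0\in\pd D\cap\Omega$, that is, a boundary point of the image that nonetheless lies in the interior of $\Omega$. The whole strategy is to show that such a $w_0$ would have to belong to $D$ itself, which is absurd since $D$ is open.

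First I would exploit that $w_0\in\Omega$ means $\e_0:=d(w_0,\pd\Omega)>0$. Because $w_0\in\pd D$, there is a sequence $z_k\in\Omega$ with $\vf(z_k)\to w_0$, so $d(\vf(z_k),\pd\Omega)\to\e_0$ and in particular $d(\vf(z_k),\pd\Omega)\ge\e_0/2$ for all large $k$. The next step, which I expect to be the heart of the argument, is to apply the contrapositive of the boundary hypothesis: since the images $\vf(z_k)$ stay at distance at least $\e_0/2$ from $\pd\Omega$, the points $z_k$ themselves cannot approach $\pd\Omega$. Concretely, the hypothesis provides a $\d>0$ with $d(z,\pd\Omega)<\d\Rightarrow d(\vf(z),\pd\Omega)<\e_0/2$, whence $d(z_k,\pd\Omega)\ge\d$ for all large $k$. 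Thus the tail of $(z_k)$ lies in the compact set $K=\{z\in\overline{\Omega}:d(z,\pd\Omega)\ge\d\}\subset\Omega$.

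From here the conclusion is routine: by compactness a subsequence $z_{k_j}$ converges to some $z_\ast\in K\subset\Omega$, and continuity of $\vf$ gives $\vf(z_\ast)=\lim_j\vf(z_{k_j})=w_0$, so that $w_0\in\vf(\Omega)=D$. But $D$ is open, hence no point of $D$ can belong to $\pd D$, contradicting $w_0\in\pd D$. Therefore $D=\Omega$, as claimed. The main obstacle is precisely the middle step: recognizing that the boundary condition should be read contrapositively so as to trap the preimage sequence inside a \emph{fixed} compact subset of $\Omega$. This confinement is what lets compactness and continuity manufacture a genuine interior preimage of the interior boundary point $w_0$, turning a point of $\pd D$ into a point of $D$ and closing the contradiction; without controlling the location of the $z_k$ the limit $z_\ast$ could escape to $\pd\Omega$ and the argument would collapse.
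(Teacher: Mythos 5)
Your proof is correct and follows essentially the same route as the paper's: a contradiction argument via Lemma~\ref{lem-gen-top}, a sequence $z_k\in\Omega$ with $\vf(z_k)\to w_0\in\pd\vf(\Omega)\cap\Omega$, compactness of $\overline{\Omega}$, and the boundary hypothesis to prevent escape to $\pd\Omega$. The only cosmetic difference is that you uniformize the hypothesis into an $\e$--$\d$ statement so as to trap the sequence in a fixed compact subset of $\Omega$ before passing to the limit, whereas the paper extracts a convergent subsequence first and then splits into the two cases $z_0\in\Omega$ and $z_0\in\pd\Omega$, applying the hypothesis in its sequential form.
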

\begin{proof}
Clearly, $\vf(\Omega)$ is a domain contained in $\Omega$. Suppose that $\vf(\Omega)\neq \Omega$. Then by Lemma~\ref{lem-gen-top}, $\Omega\cap \pd \vf(\Omega)\neq \emptyset$, and we can find a point $w_0\in \Omega\cap \pd \vf(\Omega)$. Note that $w_0 \not\in \vf(\Omega)$. Hence there exists a sequence of points $w_n\in \vf(\Omega)$ such that $w_n\to w_0$ and therefore also a sequence of points $z_n\in \Omega$ such that $w_n=\vf(z_n)\to w_0\in \Omega$, as $n\to\infty$. Since $\Omega$ is a bounded domain, some subsequence $z_{n_k}\to z_0\in\overline{\Omega}$. Of course, $w_{n_k}\to w_0\in\Omega$ and this will force a contradiction in both possible cases: $z_0\in\Omega$ and $z_0\in\pd\Omega$.
\par
Indeed, if $z_0\in\Omega$ then $w_0=\lim \vf(z_{n_k})=\vf(z_0)$, which is contradiction with the fact that $w_0\not\in \vf(\Omega)$. And if $z_0\in\pd\Omega$ then $z_{n_k}\to z_0$ means that $\lim_{k\to\infty} \mathrm{dist\,}(z_{n_k},\Omega)=0$ hence, by assumption, $\lim_{k\to\infty} \mathrm{dist\,}(\vf(z_{n_k}),\Omega)=0$. But since $\vf(z_{n_k})=w_0$, this means that $w_0\in\pd\Omega$, which is in contradiction with $w_0\in\Omega$.
\end{proof}
\par\medskip
We are now ready to prove a theorem on invertibility for our set of five axioms. Since the axioms assumed here are much weaker than the ones in \cite{HLNS}, our result is more general even in the case when $\Omega=\D$. The result, of course, also generalizes a theorem for $H^2$ from \cite{G} whose proof has partially served as an inspiration although some entirely new techniques were required here.
\par
The reader will undoubtedly notice that the statement below resembles Theorem~\ref{thm-invert-comp-disk} to some extent. Just like in Theorem~\ref{thm-invert-comp-disk}, the condition $F\in\cm(X)$ is not needed in the proof of (b), and thus it is not listed among the assumptions (it will again follow from the remaining ones).
\begin{thm} \label{thm-wco-invert}
Let $X \subset \ch(\Omega)$ be a Banach space which satisfies the set of axioms~\textbf{(A1) - (A4)} and suppose that the weighted composition operator $T_{F,\vf}$ is bounded in $X$.
\par
(a) If $T_{F,\vf}$ is invertible in $X$ then its composition symbol $\vf$ is an automorphism of \ $\Omega$ and the multiplication symbol $\,F$ does not vanish in $\Omega$.
\par
(b) If, in addition to the axioms listed, the space $X$ also satisfies Axiom~\textbf{(A5)} we have the following characterization.
\par
The WCO $T_{F,\vf}$ is invertible on $X$ if and only if its composition symbol $\vf$ is an automorphism of \ $\Omega$, the multiplication symbol $\,F$ does not vanish in $\Omega$, and $1/F\in\cm (X)$. If this is the case, then $F$ is also a self-multiplier of $X$ and the inverse operator is $T_{G,\psi}$, with the symbols given by the formula \eqref{inv-symb}, which formally reads as in the case of the disk.
\end{thm}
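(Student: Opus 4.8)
The plan is to extract everything in part (a) from the single operator identity
$$
 \La_z\circ T_{F,\vf}=F(z)\,\La_{\vf(z)}\,,\qquad z\in\Omega\,,
$$
which holds because $(T_{F,\vf}f)(z)=F(z)f(\vf(z))$. Writing $T=T_{F,\vf}$ and $U=T^{-1}$, invertibility gives that $T$ is bounded below, $\|Tf\|\ge\|f\|/\|U\|$, and surjective. Surjectivity forces $F$ to be zero-free: if $F(z_0)=0$ then $\La_{z_0}\circ T=0$, so every $g=Tf$ vanishes at $z_0$, contradicting $\mathbf 1\in X$ (Axiom~\textbf{(A2)}) with $\mathbf 1(z_0)=1$. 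Surjectivity also shows $\vf$ is non-constant, since a constant $\vf$ makes $T$ of rank one, while $X$ is at least two-dimensional as it contains both $\mathbf 1$ and the coordinate function $z=S\mathbf 1$ (Axioms~\textbf{(A2)}, \textbf{(A3)}).

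The heart of part (a) is to verify the hypothesis of Theorem~\ref{thm-self-map} for $\vf$, namely that $d(\vf(z),\pd\Omega)\to0$ whenever $d(z,\pd\Omega)\to0$. Taking norms above yields $\|\La_z\circ T\|=|F(z)|\,\|\La_{\vf(z)}\|$, and from $\La_z=(\La_z\circ T)\circ U$ we get $\|\La_z\|\le\|U\|\,|F(z)|\,\|\La_{\vf(z)}\|$, that is
$$
 \frac{|F(z)|}{\|\La_z\|}\ge\frac{1}{\|U\|\,\|\La_{\vf(z)}\|}\,.
$$
If the boundary condition failed, then after passing to a subsequence there would be points $z_n$ with $d(z_n,\pd\Omega)\to0$ while $\vf(z_n)$ stays in the compact set $K_\e=\{w\in\Omega:d(w,\pd\Omega)\ge\e\}$ for some $\e>0$. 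On $K_\e$ the point evaluations are uniformly bounded, $\|\La_{\vf(z_n)}\|\le C_\e$, so the right-hand side stays bounded away from $0$. But $F=T\mathbf 1\in X$, so Axiom~\textbf{(A4)} applied to $f=F$ forces $|F(z_n)|/\|\La_{z_n}\|\to0$, a contradiction. Hence the hypothesis of Theorem~\ref{thm-self-map} holds, and, $\vf$ being non-constant, $\vf(\Omega)=\Omega$. For injectivity, if $\vf(z_1)=\vf(z_2)$ with $z_1\ne z_2$ then $F(z_1)^{-1}\La_{z_1}\circ T=\La_{\vf(z_1)}=F(z_2)^{-1}\La_{z_2}\circ T$, so the functional $F(z_1)^{-1}\La_{z_1}-F(z_2)^{-1}\La_{z_2}$ is killed by the surjective $T$, hence vanishes on $X$; evaluating at $\mathbf 1$ gives $F(z_1)=F(z_2)$, after which it reduces to $f(z_1)=f(z_2)$ for all $f\in X$, and testing on $z=S\mathbf 1\in X$ gives $z_1=z_2$. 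Thus $\vf$ is a holomorphic bijection of $\Omega$, hence an automorphism, completing part (a).

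For part (b) I would first identify the inverse and then read off the multiplier statement. With $\psi=\vf^{-1}$ and $G$ as in \eqref{inv-symb}, a pointwise computation shows $T_{G,\psi}=C_\psi M_{1/F}$ and $T_{F,\vf}T_{G,\psi}=T_{G,\psi}T_{F,\vf}=\mathrm{id}$ on $\ch(\Omega)$; since $Uf\in X$ with $T(Uf)=f$, the uniqueness principle gives $U=T_{G,\psi}$, establishing \eqref{inv-symb}. In the forward direction $C_\psi,C_\vf$ are bounded by Axiom~\textbf{(A5)} with $C_\vf C_\psi=C_\psi C_\vf=\mathrm{id}$, so $M_F=T C_\psi$ and $M_G=U C_\vf$ are bounded, giving $F,G\in\cm(X)$, and for every $f\in X$ the function $f/F=C_\vf(Uf)$ lies in $X$, so $1/F\in\cm(X)$. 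Conversely, if $\vf$ is an automorphism, $F$ is zero-free and $1/F\in\cm(X)$, then $C_\psi M_{1/F}$ is a product of bounded operators (Axiom~\textbf{(A5)} and the hypothesis), hence bounded, and the compositions $M_FC_\vf\,C_\psi M_{1/F}$ and $C_\psi M_{1/F}\,M_FC_\vf$ collapse to $\mathrm{id}$ using $C_\vf C_\psi=C_\psi C_\vf=\mathrm{id}$ and $M_FM_{1/F}=\mathrm{id}$, so $T_{F,\vf}$ is invertible with inverse $T_{G,\psi}$. I expect the main obstacle to be the boundary estimate of part~(a): converting operator invertibility into the decay hypothesis of Theorem~\ref{thm-self-map}, which is exactly where Axiom~\textbf{(A4)}, applied to $F=T\mathbf 1$, is indispensable; everything else is bookkeeping with the compositions.
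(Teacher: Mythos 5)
Your proposal is correct and follows essentially the same route as the paper's proof: the decisive ingredients --- the lower bound $|F(z)|/\|\La_z\|\ge m/\|\La_{\vf(z)}\|$ coming from invertibility, Axiom~\textbf{(A4)} applied to $F=T\mathbf{1}$ combined with uniform boundedness of point evaluations on compacta to verify the hypothesis of Theorem~\ref{thm-self-map}, and the bounded compositions $C_{\vf^{-1}}M_{1/F}$ in part (b) --- are exactly those of the paper. Your only deviations are in the easy steps (you prove injectivity of $\vf$ via annihilating functionals and zero-freeness of $F$ via surjectivity and $\mathbf{1}\in X$, whereas the paper solves $T g=zF$ to get a left inverse $g\circ\vf=\mathrm{id}$ and reads zero-freeness off the same norm inequality), and these variants are sound.
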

\begin{proof}
(a) Suppose that $T_{F,\vf}$ is invertible. We divide the proof into a few steps.
\begin{itemize}
\item $\varphi$ is injective:
\par
First of all, the function $F$ is in $X$ since $f_0\in X$ with $f_0(z)\equiv 1$ by Axiom~\textbf{(A2)} and $F=T_{F,\varphi}f_0\in X$. Let $f_1(z)=z$. Axiom~\textbf{(A3)} implies that $f_1\cdot F\in X$. By assumption, $T_{F,\varphi}$ is invertible on $X$ and hence surjective. Therefore, there exists a function $g\in X$ such that
$$
 T_{F,\varphi}g=F\cdot(g\circ \varphi)=f_1\cdot F,
$$
that is, $g(\varphi(z))=z$ for $z\in\Omega.$ From here it follows that $\varphi$ is injective: if $\alpha$, $\beta\in\Omega$ are such that $\varphi(\alpha)=\varphi(\beta),$ then
$$
 \alpha=g(\varphi(\alpha))=g(\varphi(\beta))=\beta\,.
$$
\par
\item $\varphi$ is onto and, thus, an automorphism of $\Omega$, and also $F$ does not vanish in $\Omega$:
\par
Let $z\in\D$ be arbitrary. By Axiom~\textbf{(A1)}, both $\La_z$ and $\La_{\vf(z)}$ are bounded; moreover, $\|\La_z\|>0$ and $\|\La_{\vf(z)}\|>0$ in view of Axiom~\textbf{(A2)}. Observe that $\La_z T_{F,\vf} = F(z) \cdot \La_{\vf(z)}$. Together with the invertibility assumption on $T_{F,\vf}$, this yields
$$
 |F(z)|\cdot \|\La_{\varphi(z)}\| = \|\La_z T_{F,\vf}\| \ge \frac{\|\La_z\|}{\|T_{F,\vf}^{-1}\|}\,,
$$
whence for $m=\frac{1}{\|T_{F,\vf}^{-1}\|}$ we obtain
\begin{equation}
 \label{eq1}
 0<\frac{m}{\|\La_{\varphi(z)}\|}\le \frac{|F(z)|}{\|\La_z\|}\,.
\end{equation}
Since $z$ was arbitrary, we conclude that this holds for all $z\in \D$.
\par
By Axiom~\textbf{(A4)}, we have
$$
 \frac{|F(z)|}{\|\La_z\|}\to 0 \quad \mathrm{as } \quad z\to \partial\Omega\,,
$$
hence by \eqref{eq1}:
$$
 \|\La_{\varphi(z)}\|\to\infty \quad \mathrm{as } \quad z\to\partial\Omega\,.
$$
We will now see that this means that $\varphi(z)\to \partial\Omega$ as $z\to\partial\Omega.$ If this was not the case, we could find a sequence $\{z_n\}\subset\Omega$ such that $z_n\to\partial\Omega,$ $\|\La_{\varphi(z_n)}\|\to\infty$ and $\{\varphi(z_n)\}$ is contained in a compact subset of  $\Omega.$ For every function $f\in X$ and a compact subset $K$ of $\Omega$, the set of values $\{|\La_{\varphi(z)} f|\,\colon\,z\in K\}$ is bounded as a consequence of Axiom~\textbf{(A1)}. Hence by the uniform boundedness principle there exists a constant $C_K$ such that $\|\La_{\varphi(z_n)}\|\leq C_K$, which is absurd.
\par
Thus, $\varphi$ has the property claimed and this implies that it is a surjective self-map of $\Omega$ by Theorem~\ref{thm-self-map}.
\end{itemize}
\par\medskip
(b) As before, some of the conclusions follow from part (a) already proved. We prove the rest in two steps.
\begin{itemize}
\item $F$ is a self-multiplier of $X.$
\par
Since $\varphi$ is an automorphism of $\Omega$, by Axiom~\textbf{(A5)} the operator $C_{\varphi^{-1}}$ is bounded and
$$
 \|M_F f\|=\|T_{F,\varphi}C_{\varphi^{-1}}f\|\leq K \|f\|
$$
for some positive constant $K$ and all $f\in X$. Thus, $M_F$ is a bounded operator on $X$.
\par
\item $\frac{1}{F}$ is a self-multiplier of $X.$
\par
By \eqref{eq1}, for all $z\in\D$ we have
$$
 |F(z)|\geq m\frac{\|\La_z\|}{\|\La_{\varphi(z)}\|} \,.
$$
In order to bound $F$ from below, it suffices to see that
\begin{equation}
 \label{eq2}
 \|\La_{\varphi(z)}\|=\|\La_zC_\varphi\|\leq\|\La_z\|\|C_\varphi\|\,.
\end{equation}
It follows from here that
$$
 |F(z)|\ge m \frac{\|\La_z\|}{\|\La_{\varphi(z)}\|} \ge \frac{m}{\|C_\varphi\|}> 0\,.
$$
for every $z\in\Omega$ since the composition operator with symbol $\varphi$ is bounded by Axiom~\textbf{(A5)}. Note that $\|C_\vf\|>0$ by virtue of Axiom~\textbf{(A2)}. Hence $\frac{1}{F}$ is analytic in $\Omega.$ Since the operator
$T_{F,\varphi}$ is surjective, for each $f\in X$ there exists $g\in X$ such that
$$
 f=T_{F,\varphi}g=F\cdot(g\circ\varphi).
$$
Thus,
$$
 \frac{1}{F}f=g\circ\varphi\in X,
$$
since $C_\varphi$ is a bounded operator on $X.$ It follows that $M_{\frac{1}{F}}$ is also bounded on $X.$
\end{itemize}
\par\medskip
Conversely, if the following three assumptions are satisfied: the composition symbol $\vf$ is an automorphism of \ $\Omega$, the multiplication symbol $\,F$ does not vanish in $\Omega$, and $1/F\in\cm (X)$, we will show that $T_{F,\vf}$ is invertible and its inverse is given by the expected formula.
\par
If $\varphi$ is an automorphism of $\Omega$, so is its inverse $\varphi^{-1}$. By Axiom~\textbf{(A5)} the composition operator  $C_{\varphi^{-1}}$ is bounded on $X.$ The multiplication operator $M_{\frac{1}{F}}$ is bounded on $X$ by assumption, hence the operator $C_{\varphi^{-1}}M_{\frac{1}{F}}$ is bounded. Moreover, for each function $f$ in $X$ we have
$$
 T_{F,\varphi}C_{\varphi^{-1}}M_{\frac{1}{F}}f =T_{F,\varphi}\left(\frac{f}{F}\circ \varphi^{-1}\right) =F\cdot\left(\left(\frac{f}{F}\circ \varphi^{-1}\right)\circ \varphi\right) =f
$$
and also
$$
 C_{\varphi^{-1}}M_{\frac{1}{F}} T_{F,\varphi}f=C_{\varphi^{-1}} M_{\frac{1}{F}}\left(F\cdot(f\circ \varphi)\right) = (f\circ \varphi) \circ \varphi^{-1}=f\,.
$$
In other words, the operator $T_{F,\vf}$ is invertible and
$$
 T^{-1}_{F,\varphi} = C_{\varphi^{-1}}M_{\frac{1}{F}} = M_{\frac{1}{F\circ \varphi^{-1}}}C_{\varphi^{-1}} = T_{\frac{1}{F\circ \varphi^{-1}},\varphi^{-1}} \,.
$$
\end{proof}
\par
The space from Example~\ref{ex-isom-unbded} is again relevant here. This time it satisfies the axioms \textbf{(A1)}, \textbf{(A2)}, and \textbf{(A3)}. It can also be checked that it fails to satisfy \textbf{(A4)} and \textbf{(A5)}; for example, in view of our observations in Example~\ref{ex-isom-unbded}, we see that in general
$$
 \frac{|f(z)|}{\|\Lambda_z\|}\ge 2 |f(z)|\not\to 0\,, \qquad |z|\to 1^-\,.
$$
Thus, we cannot expect Theorem~\ref{thm-wco-invert} to hold automatically. In fact, we already know from Example~\ref{ex-isom-unbded} that not all conclusions in part (b) can hold since $1/F$ is an unbounded function and therefore cannot multiply $X$ into itself.
\par\smallskip
\textsc{Acknowledgments}. {\small The authors would like to thank Manuel Contreras and Ignacio Uriarte-Tuero for some useful comments and especially to the referee for various relevant suggestions for simplifying several arguments.}


\end{document}